\newtheorem{theorem}{Theorem}[section] 
\newtheorem{lemma}[theorem]{Lemma}
\theoremstyle{remark}
\newtheorem{remark}[theorem]{Remark}
\providecommand{\tabularnewline}{\\}
\newcommand{\lyxdot}{.}
\begin{document}
\title{Higher-order spectral element method for the stationary Stokes interface
problem in two dimensions}

\author{Shivangi Joshi%
\thanks{Research Scholar, Department of Mathematics, BITS-Pilani Hyderabad Campus, Hyderabad, India. Email: \texttt{joshishivangi024@gmail.com}}, Kishore Kumar Naraparaju%
\thanks{Associate Professor, Department of Mathematics, BITS-Pilani Hyderabad Campus, Hyderabad, India. Email: \texttt{naraparaju@hyderabad.bits-pilani.ac.in}}, Subhashree Mohapatra%
\thanks{Assistant Professor, Department of Mathematics, IIIT Delhi, Delhi, India. Email: \texttt{subhashree@iiitd.ac.in}}%
\thanks{This work has been supported by the National Board for Higher Mathematics, India.}%
}

\date{~}
\maketitle
\begin{abstract}
This article presents a higher-order spectral element method for the
two-dimensional Stokes interface problem involving a piecewise constant
viscosity coefficient. The proposed numerical formulation is based
on least-squares formulation. The mesh is aligned with the interface,
and the interface is completely resolved using blending element functions.
The higher-order spectral element functions are nonconforming, and
the same-order approximation is used for both velocity and pressure
variables. The interface conditions are added to the minimizing functional
in appropriate Sobolev norms. Stability and error estimates are proven.
The proposed method is shown to be exponentially accurate in both
velocity and pressure variables. The theoretical estimates are validated
through various numerical examples.
\end{abstract}
\textbf{Keywords:} Interface, Viscosity coefficient, Spectral element,
Nonconforming, Normal equations, Preconditioner, Exponential accuracy
\\
\\
\textbf{MSC:} 65N35,65F10,35J57 

\section{Introduction}

This paper uses a piecewise constant viscosity coefficient to look at the Stokes interface problem in multi-phase incompressible fluid flows. It applies to situations like emulsions, biological flows, and industrial mixing, where fluids have different densities and viscosities at interfaces. The jump in the viscosity coefficient at the interface results in jumps in the velocity and pressure. This leads to low global regularity and, therefore, makes the Stokes interface problem challenging to solve. These discontinuities, combined with the complex geometry of the interface, add to the difficulty of achieving accurate numerical approximations. Maintaining good accuracy while keeping the computational cost reasonably low is particularly challenging. The numerical approximation of the Stokes interface problem has been widely studied in the literature; however, the regularity of the solution in the individual regions is little known. 

Broadly, we can categorize the numerical methods for the interface problems into two categories: the fitted mesh methods and the unfitted mesh approach. Fitted mesh methods align the mesh with the interface, approximating it using piecewise polynomials. However, re-meshing is a requirement when the interfaces move and increases the computational complexity. Moving interfaces are made easier with unfitted mesh approaches where the mesh has holes and cuts the interface through its elements. These methods are subdivided into enrichment-based methods, which add degrees of freedom near the interface, and modified space methods, which adapt the finite element space to satisfy interface conditions without increasing degrees of freedom. Prominent examples include Nitsche's method, the Immersed Finite Element Method (IFEM), the Extended Finite Element Method (XFEM), and the Cut Finite Element Method (CutFEM). All these methods have advantages and disadvantages; sometimes, accuracy, stability, and computational cost have to be compromised. In all these methods, the finite element spaces are wisely chosen for velocity and pressure variables such that the Ladyzhenskaya--Babu\v{s}ka--Brezzi (LBB) condition is satisfied.

Several other approaches have been explored to solve Stokes interface problems. Finite difference methods, such as those by Li et al.~\cite{LI1} and Dong et al.~\cite{DoZh}, handle discontinuous viscosity along arbitrary interfaces and resolve jumps in the solution and its derivatives. A generalized finite difference method was also introduced by Shao et al.~\cite{shaosongli}. 

In the context of finite volume methods, Sevilla and Duretz~\cite{SeDu} developed a face-centered finite volume method (FCFV) that avoids the Ladyzhenskaya–Babuška–Brezzi (LBB) condition while maintaining desirable properties from hybridizable discontinuous Galerkin methods.

Finite element methods have also been extensively studied. Olshanskii and Reusken~\cite{OL1} analyzed a finite element formulation and its solver, establishing well-posedness and inf-sup results for cases with uniform viscosity jumps. Theoretical insights into the interface conditions, including pressure and velocity jumps, were also derived in \cite{IT1}. Ohmori and Saito~\cite{OH1} proved optimal error bounds for various approximations,  such as MINI element and P1-iso-P2/P1 element approximations, while Song and Gao~\cite{SO1} validated the inf-sup stability of methods for multi-subdomain cases.

A cut finite element method for a Stokes interface problem has been studied in \cite{HA1}. The authors have used Nitsche's formulation to capture interface discontinuities, and to ensure a well-conditioned matrix, stabilization terms are added for both velocity and pressure. The optimal order convergence is proved theoretically and verified numerically. 

Kirchhart et al. \cite{KI1} have addressed Stokes interface problems using $P_{1}$ extended finite element space for pressure
and standard confirming $P_{2}$ finite element space for velocity. A stabilizing term has been added for stability. An optimal preconditioner for the stiffness matrix corresponding to the pair mentioned above is presented. Error estimates are derived. Because of the standard $P_{2}$-FE velocity space, the error bound is not optimal when the normal derivative of the velocity is discontinuous across the interface.
In \cite{LE1}, an unfitted finite element discretisation based on the Taylor-Hood velocity-pressure pair with XFEM (or CutFEM) enhancements has been proposed. Nitsche's formulation
is used to implement interface conditions, and a ghost penalty stabilization is used to ensure inf-sup stability. Further, optimal error bounds are shown.

In 2015, Adjerid et al. \cite{AD} proposed an immersed $Q_{1}-Q_{0}$ discontinuous Galerkin method for Stokes interface problems, in which they considered the coupling of velocity and pressure while constructing the immersed finite element (IFE) spaces. The concept was extended by utilizing immersed $CR-P_{0}$ (where $CR$ stands for Crouzeix--Raviart) and the Rannacher--Turek rotated $Q_{1}-Q_{0}$ elements in \cite{JO1}. Numerical results with optimal convergence rates have
been provided. Chen and Zhang \cite{CH1} have proposed a Taylor-Hood-immersed finite element method for Stokes interface problems. Lower-order immersed finite element spaces $(P_{2}-P_{1})$ are used for approximation.
Numerical examples with different interface conditions and different coefficients demonstrate the optimal convergence of the method. The theoretical study for IFE approaches for Stokes interface problems has been provided in \cite{jiwangli}. The conventional $CR-P_{0}$ finite element space is modified to create IFE spaces. The unisolvence of IFE basis functions and the best approximation abilities of IFE space have been proven. Furthermore, the authors have shown that the IFE approach is stable and produces optimal error estimates. The
authors of \cite{JoKw} came up with a new $P_{1}$-nonconforming-based IFE method for Stokes interface problems that is different from the one used in \cite{AD,JO1} in terms of the modification of the basis functions. They constructed a velocity basis on the interface element,
which is less coupled to the pressure basis compared with \cite{AD} or \cite{JO1}. Another characteristic of their IFE space is that the pressure basis has two degrees of freedom on the interface element to manage the discontinuity of the pressure variable. Unfitted methods have also been studied in \cite{burmandel,huwangchen,nwangchen,WA1,WA2,ramanbhupen}.

Despite these advances, existing methods often rely on lower-order approximations or use different polynomial orders for velocity and pressure to satisfy stability conditions.
 Least-squares methods address this limitation by producing symmetric and positive-definite systems. Spectral methods, known for their high accuracy, have been combined with least-squares formulations for Stokes flows, as discussed in~\cite{BOTH, JN1, PROO2, PR003}. Hessari~\cite{He2} demonstrated spectral convergence for least-squares pseudo-spectral methods, while other works~\cite{HE1, hes3} addressed cases with singular sources and discontinuous viscosities.This problem
has also been studied using various numerical schemes in \cite{AS,CHI,He,johnansonn,Su,YA}.

The nonconforming spectral element method (NSEM) has shown promise for solving steady-state Stokes systems~\cite{kishsubham, S1, S2, S3}.  In
\cite{kishnaga,arbazsubham}, nonconforming spectral element method for elliptic interface problems in two and three dimensions has been studied, respectively. In \cite{kishore kumar}, NSEM for elasticity interface problems has been studied. More details about NSEM can be found in \cite{kishshi}.

Building on such advances, we propose a least-squares-based nonconforming spectral element
method for Stokes interface problems with smooth interfaces. Unlike classical methods, this approach ensures exponential accuracy while maintaining computational efficiency. Key features of the proposed method include the use of same-order spectral element functions for velocity and pressure, incorporation of interface conditions into the minimizing functional via appropriate Sobolev norms, and exponential convergence for both velocity and pressure variables.

The error estimates derived for velocity (in the $H^1$ norm) and pressure (in the $L^2$ norm) demonstrate exponential decay. These results are verified numerically and also highlight the mass conservation property of the scheme through analysis of the error in the continuity equation. This
paper is structured as follows: 
Section~(\ref{sec:interface-problem}) introduces the Stokes interface problem and  establishes basic regularity results. Section~(\ref{sec:discretization}) describes the nonconforming spectral element discretization and the construction of suitable basis functions. Section~(\ref{sec:stability-estimate}) presents the least-squares formulation, proves the discrete stability estimate. In Section~(\ref{sec:Numerical Formulation_ch4}), we formulate the least-squares spectral element functional that forms the basis of the NSEM discretization. This functional not only incorporates the residuals of the Stokes equations within each element but also systematically penalizes jumps in velocity, its derivatives, and pressure across inter-element boundaries and jumps across the physical interface $\Gamma_0$.
Section~(\ref{sec:ErrorEstimates_ch4}) presents the main theoretical error estimates for the method. It demonstrates that the method achieves exponential convergence in the relevant norms under regularity assumptions.
Section~(\ref{sec:NumericalResultsCh4}) presents an extensive array of numerical experiments centered on benchmark Stokes interface problems, showing exponential convergence in both velocity and pressure. We also present the error in the continuity equation to demonstrate the conservation property of the method.
 Finally, Section~(\ref{sec:conclusion}) concludes the paper with insights and future directions.

Here we give some notations and define required function spaces. Let
$\Omega\subset\mathbb{R}^{2},$ be an open bounded set with sufficiently
smooth boundary $\partial\Omega$. $H^{m}(\Omega)$ denotes the Sobolev
space of functions with square integrable derivatives of integer order
less than or equal to $m$ in $\Omega$ equipped with the norm 
\[
\left\Vert u\right\Vert _{H^{m}(\Omega)}^{2}=\sum_{\left|\alpha\right|\leq m}\left\Vert D^{\alpha}u\right\Vert _{L^{2}(\Omega)}^{2}.
\]
Further, let $I=(-1,1).$ Then we define fractional norms $(0<s<1)$
by : 
\begin{eqnarray*}
\left\Vert w\right\Vert _{s,I}^{2}=\|w\|_{0,I}^{2}+\int_{I}\int_{I}\frac{\left|w(\xi)-w(\xi^{\prime})\right|^{2}}{\left|\xi-\xi^{\prime}\right|^{1+2s}}d\xi d\xi^{\prime}.
\end{eqnarray*}
Moreover, 
\begin{align*}
\|w\|_{1+s,I}^{2}=\|w\|_{0,I}^{2}+\left\Vert \:\frac{\partial w}{\partial\xi}\:\right\Vert _{s,I}^{2}+\left\Vert \:\frac{\partial w}{\partial\eta}\:\right\Vert _{s,I}^{2}\;.
\end{align*}
We denote the vectors by bold letters. For example, $\bm{u}=(u_{1},u_{2})^{T}$,
$\mathbf{H}^{k}(\Omega)=H^{k}(\Omega)\times H^{k}(\Omega),$ etc.
The norms are given by $||\bm{u}||_{k,\Omega}^{2}=||u_{1}||_{k,\Omega}^{2}+||u_{2}||_{k,\Omega}^{2}$
for $\bm{u}\in\mathbf{H}^{k}(\Omega),$ $\left\Vert \bm{u}\right\Vert _{s,I}^{2}=||u_{1}||_{s,I}^{2}+||u_{2}||_{s,I}^{2},$
etc.

\section{Stokes interface problem }
\label{sec:interface-problem}

\begin{figure}[H]
\centering
\includegraphics[scale=0.8]{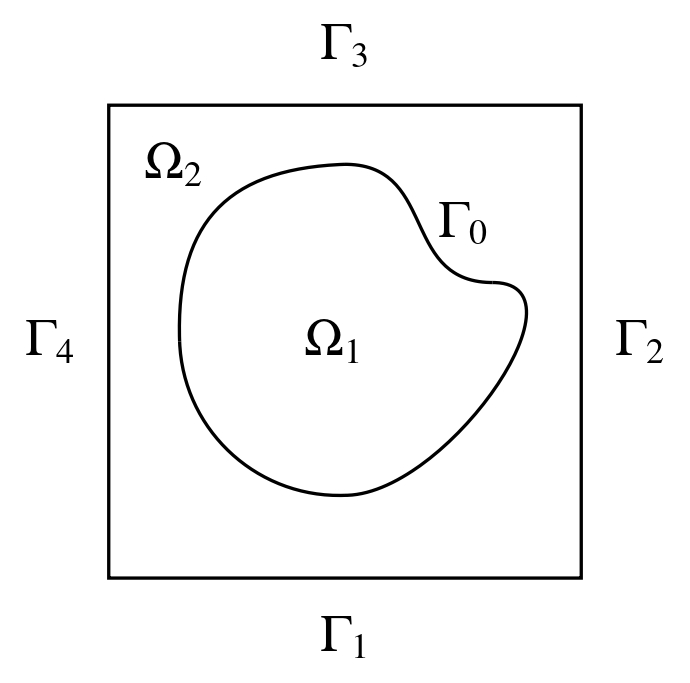}
\caption{Domain with smooth interface $\Gamma_{0}$}
\label{fig:Domain with smooth interface}
\end{figure}
Let $\Omega \subset \mathbb{R}^{2}$ be a bounded polygonal domain, and let $\Omega_1 \subset \Omega$ be an open subdomain with boundary $\Gamma_0 = \partial \Omega_1 \subset \Omega$. Define $\Omega_2 = \Omega \setminus \overline{\Omega}_1$ such that $\Omega_1 \cap \Omega_2 = \emptyset$ (see Figure~(\ref{fig:Domain with smooth interface})). Here, $\Gamma_0$ is the interface between the subdomains $\Omega_1$ and $\Omega_2$, and is assumed to be smooth, at least of class $C^2$. Let the boundary of $\Omega$ be denoted by $\Gamma = \cup_{i=1}^{4} \Gamma_i$. A point in $\Omega$ is denoted by $\bm{x} = (x, y)$.

We consider the following Stokes interface problem:
\begin{align}
\begin{cases}
-\operatorname{div}(\nu \nabla \bm{u}) + \nabla p = \bm{f} \quad &\text{in } \Omega_1 \cup \Omega_2, \\
\operatorname{div} \bm{u} = \bm{0} \quad &\text{in } \Omega_1 \cup \Omega_2, \\
\bm{u} = \bm{0} \quad &\text{on } \Gamma,
\end{cases}
\label{eq:3.1}
\end{align}
with the interface conditions
\begin{align}
\llbracket \bm{u} \rrbracket = \bm{0}, \qquad
\llbracket (\nu \nabla \bm{u} - p \, \bm{I}) \bm{n} \rrbracket = \bm{g} \quad \text{on } \Gamma_0,
\label{eq:3.2}
\end{align}
where the viscosity $\nu$ is piecewise constant:
\[
\nu =
\begin{cases}
\nu_1 > 0 &\text{in } \Omega_1, \\
\nu_2 > 0 &\text{in } \Omega_2.
\end{cases}
\]

Here, $\bm{u} = (u_1, u_2)^{T}$ is the velocity field, $\bm{f} = (f_1, f_2)^{T}$ is the force term, and $\bm{I}$ denotes the identity matrix.

We define the following differential operators used throughout the formulation:
\begin{equation}
\mathcal{L} \bm{u} := -\operatorname{div}(\nu \nabla \bm{u}) + \nabla p, \qquad
\mathcal{D} \bm{u} := -\operatorname{div} \bm{u}.
\label{eq:3.3}
\end{equation}

The velocity and pressure fields are denoted separately on the two subdomains as:
\begin{align}
\bm{u}_1 &:= \bm{u}|_{\Omega_1} = 
\begin{pmatrix}
u_1^1 \\ u_2^1
\end{pmatrix}, 
&
\bm{u}_2 &:= \bm{u}|_{\Omega_2} = 
\begin{pmatrix}
u_1^2 \\ u_2^2
\end{pmatrix}, \notag \\
p_1 &:= p|_{\Omega_1}, 
&
p_2 &:= p|_{\Omega_2}. 
\label{eq:3.4}
\end{align}

The jump in the velocity field across the interface $\Gamma_0$ is defined as
\begin{equation}
\llbracket \bm{u} \rrbracket := \bm{u}_1 - \bm{u}_2.
\label{eq:3.5}
\end{equation}

Let $\bm{n} = (n_1, n_2)^{\mathrm{T}}$ denote the unit normal vector to $\Gamma_0$ pointing outward from $\Omega_1$ into $\Omega_2$. Then, the jump in the stress across the interface is defined by
\begin{equation}
\llbracket (\nu \nabla \bm{u} - p \, \bm{I}) \bm{n} \rrbracket :=
(\nu_1 \nabla \bm{u}_1 - p_1 \bm{I}) \, \bm{n}
- (\nu_2 \nabla \bm{u}_2 - p_2 \bm{I}) \, \bm{n},
\label{eq:3.6}
\end{equation}

where $\bm{I}$ denotes the $2 \times 2$ identity matrix.

\subsubsection*{Uniqueness and Regularity}

We define the broken Sobolev space over the union $\Omega_1 \cup \Omega_2$ as
\[
\bm{H}^{k}(\Omega_1 \cup \Omega_2) :=
\left\{ \bm{u} \in \bm{L}^{2}(\Omega) \,\middle|\, 
\bm{u}|_{\Omega_i} \in \bm{H}^{k}(\Omega_i), \; i = 1, 2 \right\},
\]
with the norm
\[
\| \bm{u} \|_{k, \Omega_1 \cup \Omega_2} :=
\| \bm{u}_1 \|_{k, \Omega_1} + \| \bm{u}_2 \|_{k, \Omega_2}.
\]

We assume the pressure variable satisfies the compatibility condition
\[
\int_{\Omega} p \, \bm{dx} = 0.
\]
Let $\bm{f} \in \bm{L}^{2}(\Omega)$ and $\bm{g} \in \bm{H}^{1/2}(\Gamma_0)$. Then the variational formulation of the Stokes interface problem \eqref{eq:3.1}-\eqref{eq:3.2} admits a unique weak solution
\[
(\bm{u}, p) \in \bm{H}_0^{1}(\Omega) \times L^{2}(\Omega).
\]

Moreover, the solution satisfies the following regularity result (see~\cite{shibata, WA1}):
\[
\bm{u} \in \bm{H}^{2}(\Omega_1 \cup \Omega_2), \quad
p \in H^{1}(\Omega_1 \cup \Omega_2).
\]

\begin{theorem}[Subdomain Regularity Estimate]
\label{thm:regularity}
Assume that $\bm{f} \in \bm{L}^{2}(\Omega)$ and $\bm{g} \in \bm{H}^{1/2}(\Gamma_0)$. Then the weak solution $(\bm{u}, p)$ to the Stokes interface problem~\eqref{eq:3.1}–\eqref{eq:3.2} satisfies the estimate
\[
\| \bm{u} \|_{2, \Omega_1 \cup \Omega_2} + \| p \|_{1, \Omega_1 \cup \Omega_2}
\leq C \left( \| \bm{f} \|_{0, \Omega} + \| \bm{g} \|_{1/2, \Gamma_0} \right),
\]
where $C > 0$ is a constant independent of the discretization.
\end{theorem}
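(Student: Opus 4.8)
The plan is to combine a global energy estimate with local elliptic regularity for transmission problems, the pieces being patched together by a partition of unity. The first step is the \emph{a priori energy bound}
\[
\|\bm{u}\|_{1,\Omega} + \|p\|_{0,\Omega} \le C\left(\|\bm{f}\|_{0,\Omega} + \|\bm{g}\|_{1/2,\Gamma_0}\right).
\]
This follows from the variational formulation: coercivity of the form $\bm{v}\mapsto\int_\Omega \nu\,\nabla\bm{v}:\nabla\bm{v}\,d\bm{x}$ on $\bm{H}_0^{1}(\Omega)$ (with constant $\min(\nu_1,\nu_2)$ times the Poincaré constant), the LBB/inf-sup condition controlling $\|p\|_{0,\Omega}$ through the divergence term, and a trace–duality estimate for the interface functional $\bm{v}\mapsto\langle\bm{g},\bm{v}\rangle_{\Gamma_0}$, bounded by $\|\bm{g}\|_{-1/2,\Gamma_0}\|\bm{v}\|_{1,\Omega}\le\|\bm{g}\|_{1/2,\Gamma_0}\|\bm{v}\|_{1,\Omega}$. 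The compatibility condition $\int_\Omega p\,d\bm{x}=0$ fixes the pressure constant.

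Next I would localize. Fix a finite open cover $\{\mathcal{O}_j\}$ of $\overline{\Omega}$ with a subordinate partition of unity $\{\chi_j\}$, arranged so that each $\mathcal{O}_j$ is of one of three types: (i) $\overline{\mathcal{O}_j}\subset\Omega_1\cup\Omega_2$; (ii) $\mathcal{O}_j$ meets $\Gamma_0$ but not $\partial\Omega$; (iii) $\mathcal{O}_j$ meets $\partial\Omega$ but not $\Gamma_0$. For type (i), classical interior Stokes regularity applied to $(\chi_j\bm{u},\chi_j p)$ gives $\chi_j\bm{u}\in\bm{H}^2$, $\chi_j p\in H^1$, with a bound in terms of $\|\bm{f}\|_{0,\Omega}+\|\bm{u}\|_{1,\Omega}+\|p\|_{0,\Omega}$ (the commutator terms produced by $\chi_j$ being of lower order). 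For type (iii), the condition $\bm{u}=\bm{0}$ on $\Gamma$ reduces the problem to the standard Dirichlet Stokes system near $\partial\Omega$, and the $\bm{H}^2\times H^1$ regularity with the associated estimate is exactly the result cited from \cite{shibata, WA1} (for the polygonal corners of $\Omega$ one invokes the admissible-angle hypothesis implicit in those references).

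The heart of the argument is type (ii). On an interface patch I would use the assumed $C^2$ regularity of $\Gamma_0$ to introduce a local $C^2$ diffeomorphism flattening $\Gamma_0$, carrying $\mathcal{O}_j\cap\Omega_1$ and $\mathcal{O}_j\cap\Omega_2$ onto the two halves of a ball split by a hyperplane. The transformed system is a second-order elliptic system with $C^1$ coefficients (the principal part still governed by $\nu_1,\nu_2$) posed on the two half-balls and coupled through the transmission conditions $\llbracket\bm{u}\rrbracket=\bm{0}$ and $\llbracket(\nu\nabla\bm{u}-p\bm{I})\bm{n}\rrbracket=\bm{g}$. The key point — and the step I expect to be the main obstacle — is to verify that this transmission problem is elliptic in the Agmon–Douglis–Nirenberg sense, i.e. that the Stokes operator together with the pair of transmission operators satisfies the Shapiro–Lopatinskii (complementing) condition along the flattened interface. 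Granting this (it is established in \cite{shibata, WA1} for precisely this Stokes transmission setting), the ADN a priori estimate yields, after transporting back by the bi-$C^2$ change of variables and absorbing the lower-order commutator terms,
\[
\|\bm{u}\|_{2,(\mathcal{O}_j\cap\Omega_1)\cup(\mathcal{O}_j\cap\Omega_2)} + \|p\|_{1,(\mathcal{O}_j\cap\Omega_1)\cup(\mathcal{O}_j\cap\Omega_2)} \le C\left(\|\bm{f}\|_{0,\Omega} + \|\bm{g}\|_{1/2,\Gamma_0} + \|\bm{u}\|_{1,\Omega} + \|p\|_{0,\Omega}\right).
\]

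Finally I would sum the local estimates against the partition of unity to get
\[
\|\bm{u}\|_{2,\Omega_1\cup\Omega_2} + \|p\|_{1,\Omega_1\cup\Omega_2} \le C\left(\|\bm{f}\|_{0,\Omega} + \|\bm{g}\|_{1/2,\Gamma_0} + \|\bm{u}\|_{1,\Omega} + \|p\|_{0,\Omega}\right),
\]
and then eliminate the $\|\bm{u}\|_{1,\Omega}+\|p\|_{0,\Omega}$ on the right-hand side using the energy bound of the first step. This produces the claimed inequality with $C$ depending only on $\Omega$, $\Gamma_0$, $\nu_1$ and $\nu_2$, hence independent of any discretization.
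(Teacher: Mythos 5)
The paper does not actually prove Theorem~\ref{thm:regularity}: it states the result and refers to \cite{shibata, WA1} for the regularity theory, so there is no internal argument to compare against. Your outline reconstructs what those references (and the standard literature on Stokes transmission problems) essentially do: a global energy/inf-sup estimate for $(\bm{u},p)\in\bm{H}_0^1(\Omega)\times L_0^2(\Omega)$, then a partition of unity separating interior, outer-boundary and interface patches, interior Stokes regularity, flattening of the $C^2$ interface, an Agmon--Douglis--Nirenberg estimate for the resulting transmission problem once the complementing (Shapiro--Lopatinskii) condition is checked, and finally absorption of the lower-order terms by the energy bound. That is the right skeleton, and the commutator and divergence terms you dismiss as lower order are indeed harmless (e.g. $\operatorname{div}(\chi_j\bm{u})=\nabla\chi_j\cdot\bm{u}\in H^1$, and the localized stress-jump corrections are traces of $H^1$ functions, hence in $H^{1/2}(\Gamma_0)$).

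Two caveats, which are exactly the points the paper outsources to the citation and which your sketch also defers rather than settles. First, the verification of the complementing condition for the pair of interface operators $\llbracket\bm{u}\rrbracket$ and $\llbracket(\nu\nabla\bm{u}-p\bm{I})\bm{n}\rrbracket$ with piecewise constant $\nu$ is the genuinely nontrivial step; you correctly identify it but take it from \cite{shibata, WA1}, so your argument is a proof modulo the same external input the authors invoke. Second, $\Omega$ is polygonal, and full $\bm{H}^2\times H^1$ regularity up to $\Gamma$ for the Dirichlet--Stokes problem fails at re-entrant corners; the estimate as stated implicitly requires convexity of $\Omega$ (as in the square domains used later in the paper), and your phrase about an ``admissible-angle hypothesis'' should be made explicit rather than attributed to \cite{shibata, WA1}, whose setting is a smooth (resp. different) boundary. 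With those two provisos made explicit, your proposal is a correct and appropriately structured proof of the stated estimate.
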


\begin{remark}[Global vs. Subdomain Regularity]
Unlike scalar elliptic interface problems where global and subdomain regularity are well characterized, the theory for Stokes interface problems is less complete. While it is known that $\bm{u} \in \bm{H}^2(\Omega_i)$ and $p \in H^1(\Omega_i)$ for $i = 1,2$ under sufficient smoothness of $\Gamma_0$ and the data, global $H^2$-regularity across $\Omega$ is generally not guaranteed. This lack of regularity stems from the discontinuity in viscosity and associated stress, which induces singular behavior across $\Gamma_0$. The identification and treatment of such singularities remain an active area of research.
\end{remark}

Based on the regularity theory of the Stokes problem and Theorem~(\ref{thm:regularity}) (see also \cite{kishsubham,ramanbhupen}), we obtain:

\begin{equation}
\left\Vert \bm{u} \right\Vert_{2, \Omega_1 \cup \Omega_2}
+ \left\Vert p \right\Vert_{1, \Omega_1 \cup \Omega_2}
\leq C \left(
\left\Vert \bm{f} \right\Vert_{0, \Omega}
+ \left\Vert \mathcal{D} \bm{u} \right\Vert_{1, \Omega}
+ \left\Vert \llbracket \bm{u} \rrbracket \right\Vert_{3/2, \Gamma_0}
+ \left\Vert \bm{g} \right\Vert_{1/2, \Gamma_0}
+ \left\Vert \bm{u} \right\Vert_{3/2, \Gamma}
\right).
\label{eq:3.7}
\end{equation}
Here again, $C$ is independent of mesh parameters but may depend on the domain and regularity of the interface.

\section{Discretization and Spectral Element Functions}
\label{sec:discretization}
\begin{figure}[H]
\centering
\includegraphics[scale=0.8]{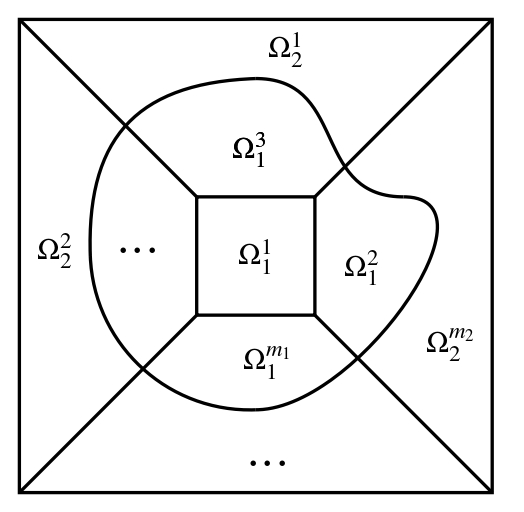}
\caption{Interface-fitted mesh discretization}
\label{fig:Interface-fitted mesh discretization}
\end{figure}

The subdomains $\Omega_1$ and $\Omega_2$ are partitioned into a finite number of curvilinear quadrilateral elements:
\[
\Omega_1 = \bigcup_{l=1}^{m_1} \Omega_1^l, \qquad \Omega_2 = \bigcup_{l=1}^{m_2} \Omega_2^l,
\]
such that the discretization matches on the interface (see Figure~(\ref{fig:Interface-fitted mesh discretization})). Every triangle can be subdivided into quadrilaterals, making this a general approach.

Define analytical invertible bilinear maps $M_i^l: {S} \rightarrow \Omega_i^l$, where the reference square is given by
\[
{S} = (-1,1)^2 = \{ (\xi, \eta) \in \mathbb{R}^2 \mid -1 < \xi < 1, -1 < \eta < 1 \},
\]
such that
\[
(x, y) = M_i^l(\xi, \eta) = \left(X_i^l(\xi, \eta), Y_i^l(\xi, \eta)\right), \quad \text{for } i=1,2,\ \text{ and, } l=1,\dots,m_i.
\]
These mappings can be constructed using transfinite interpolation techniques (see \cite{GO}) and are assumed to be smooth and invertible.

The spectral element functions $\{ \widetilde{\bm{u}}_1^l \}_l$ and $\{ \widetilde{\bm{u}}_2^l \}_l$ are defined as tensor-product polynomials of degree \( W \) in the reference coordinates \( (\xi, \eta) \):
\begin{equation}
\begin{aligned}
\widetilde{\bm{u}}_1^l(\xi, \eta) 
&= 
\begin{pmatrix}
\left(\tilde{u}_{1}^{1,l}(\xi,\eta),\tilde{u}_{2}^{1,l}(\xi,\eta)\right)^{T}
\end{pmatrix}
=
\begin{pmatrix}
\displaystyle \sum_{r=0}^W \sum_{s=0}^W g_{r,s}^{1,l} \xi^r \eta^s \\
\displaystyle \sum_{r=0}^W \sum_{s=0}^W g_{r,s}^{2,l} \xi^r \eta^s
\end{pmatrix}, \\[2ex]
\widetilde{\bm{u}}_2^l(\xi, \eta) 
&= 
\begin{pmatrix}
\left(\tilde{u}_{1}^{2,l}(\xi,\eta),\tilde{u}_{2}^{2,l}(\xi,\eta)\right)^{T}
\end{pmatrix}
=
\begin{pmatrix}
\displaystyle \sum_{r=0}^W \sum_{s=0}^W h_{r,s}^{1,l} \xi^r \eta^s \\
\displaystyle \sum_{r=0}^W \sum_{s=0}^W h_{r,s}^{2,l} \xi^r \eta^s
\end{pmatrix}.
\end{aligned}
\label{eq:3.8}
\end{equation}

These are simply Lagrange interpolating polynomials defined on the Gauss--Lobatto--Legendre (GLL) points.

The corresponding physical spectral element functions $\{ \bm{u}_1^l \}_l$ and $\{ \bm{u}_2^l \}_l$ are obtained by composing with the inverse of the element mappings:
\begin{equation}
\bm{u}_1^l(x, y) = \widetilde{\bm{u}}_1^l \circ (M_1^l)^{-1}(x, y), \qquad
\bm{u}_2^l(x, y) = \widetilde{\bm{u}}_2^l \circ (M_2^l)^{-1}(x, y).
\label{eq:3.9}
\end{equation}

Similarly, for the pressure field, the reference pressure functions are defined as:
\begin{equation}
\tilde{p}_i^l(\xi, \eta) = \sum_{r=0}^W \sum_{s=0}^W b_{r,s}^{i,l} \, \xi^r \eta^s, \quad\text{for } i=1,2,\ l=1,\dots,m_i.
\label{eq:3.10}
\end{equation}

The corresponding physical pressure fields \( p_i^l \) in \( \Omega_i \) are then given by:
\begin{equation}
p_i^l(x, y) = \tilde{p}_i^l \circ (M_i^l)^{-1}(x, y).
\label{eq:3.11}
\end{equation}

All spectral element functions are assumed to be nonconforming; that is, continuity is not enforced across inter-element boundaries. Notably, the method does not require different polynomial orders for velocity and pressure approximations. This flexibility is a key feature of the nonconforming spectral element method (NSEM) framework.

\section{Stability Estimate}
\label{sec:stability-estimate}
For \(i=1,2\) and \(l=1,\dots,m_i\), we define
\[
\mathcal{L}(\bm{u}_{i}^{l}, p_{i}^{l}) = -\operatorname{div}(\nu_{i} \nabla \bm{u}_{i}^{l}) + \nabla p_{i}^{l},
\quad \text{and} \quad
\mathcal{D}(\bm{u}_{i}^{l}) = -\nabla \cdot \bm{u}_{i}^{l}.
\]

Let \( J_{i}^{l}(\xi, \eta) \) denote the Jacobian of the mapping \( M_{i}^{l}(\xi, \eta) \) from \( S = (-1,1)^2 \) to \( \Omega_{i}^{l} \), for \( i = 1, 2 \) and \( l = 1, \dots, m_i \).
 Then,
\begin{equation}
\int_{\Omega_{i}^{l}} \left| \mathcal{L}(\bm{u}_{i}^{l}, p_{i}^{l}) \right|^2 \, dx \, dy =
\int_{S} \left| \mathcal{L}(\widetilde{\bm{u}}_{i}^{l}, \tilde{p}_{i}^{l}) \right|^2 J_{i}^{l} \, d\xi \, d\eta.
\label{eq:3.12}
\end{equation}

Define $\mathcal{L}_{i}^{l}(\widetilde{\bm{u}}_{i}^{l}, \tilde{p}_{i}^{l}) = \mathcal{L}(\widetilde{\bm{u}}_{i}^{l}, \tilde{p}_{i}^{l}) \sqrt{J_{i}^{l}}$. Then,
\begin{equation}
\int_{\Omega_{i}^{l}} \left| \mathcal{L}(\bm{u}_{i}^{l}, p_{i}^{l}) \right|^2 \, dx \, dy =
\int_{S} \left| \mathcal{L}_{i}^{l}(\widetilde{\bm{u}}_{i}^{l}, \tilde{p}_{i}^{l}) \right|^2 \, d\xi \, d\eta.
\label{eq:3.13}
\end{equation}

Similarly, define \(\mathcal{D}_{i}^{l} \widetilde{\bm{u}}_{i}^{l} = \mathcal{D}(\widetilde{\bm{u}}_{i}^{l}) \sqrt{J_{i}^{l}}\).
We approximate the coefficients in \(\mathcal{L}_{i}^{l}(\widetilde{\bm{u}}_{i}^{l}, \tilde{p}_{i}^{l})\) and \(\mathcal{D}_{i}^{l} \widetilde{\bm{u}}_{i}^{l}\) by polynomials of degree \(W\), obtained by orthogonal projection onto \(H^2(S)\). The resulting operators are denoted by \((\mathcal{L}_{i}^{l})^a(\widetilde{\bm{u}}_{i}^{l}, \tilde{p}_{i}^{l})\) and \((\mathcal{D}_{i}^{l})^a \widetilde{\bm{u}}_{i}^{l}\), respectively.

\begin{remark}
It can be shown that the error in the approximation is spectrally small. To keep the presentation simple, we omit the error term wherever it appears in the theory in the following sections.
\end{remark}

\begin{figure}[H]
\centering
\includegraphics[scale=0.5]{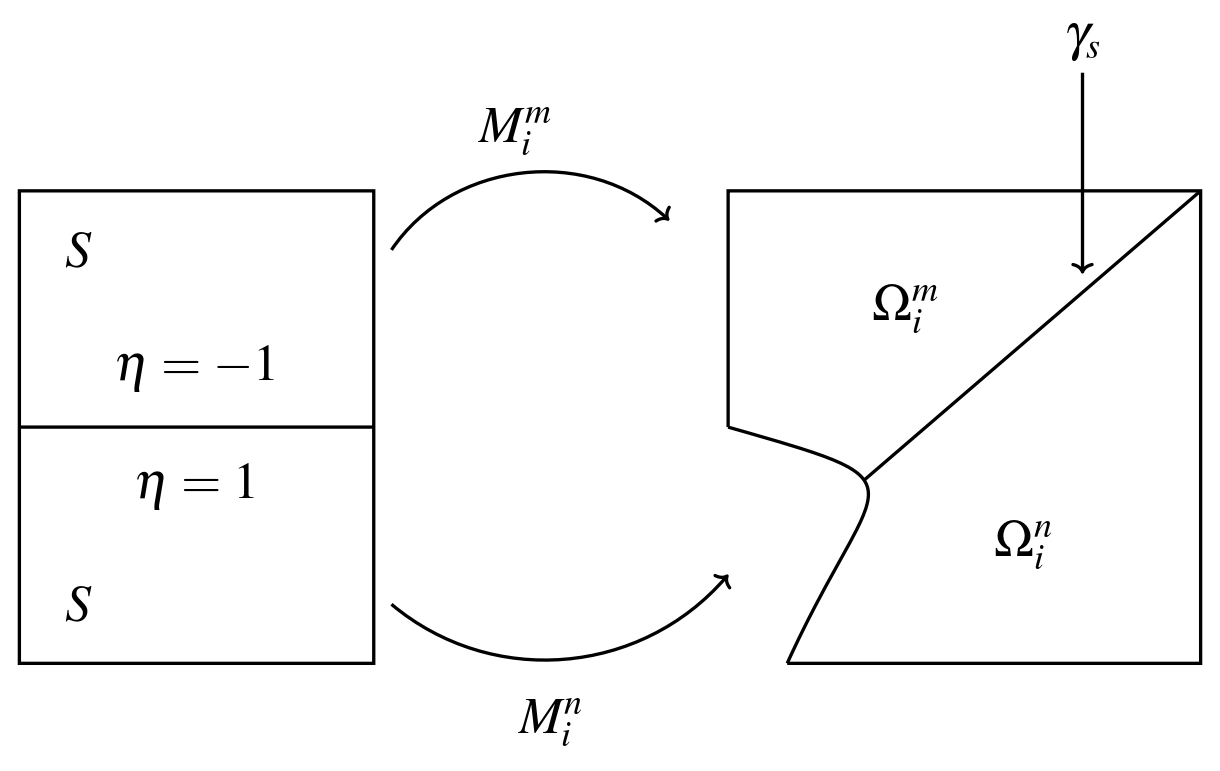}
\caption{Inter-element boundary transfer}
\label{fig:Interelement_transfer}
\end{figure}
To define the stability functional, let \(\gamma_s\) denote a side shared by adjacent elements \(\Omega_{i}^{m}\) and \(\Omega_{i}^{n}\), for \(i = 1, 2\); (see Figure~(\ref{fig:Interelement_transfer})). Assume that \(\gamma_s\) is the image of \(\eta = -1\) under \(M_{i}^{m}\), and of \(\eta = 1\) under \(M_{i}^{n}\). 

Let \(\hat{\xi}_x(\xi,\eta)\) denote the polynomial approximation of \(\xi_x(\xi,\eta)\) of degree \(W\) in \(\xi\) and \(\eta\) separately, as defined in Theorem~(4.46) of~\cite{schwab}. In a similar way, \(\hat{\xi}_y\), \(\hat{\eta}_x\), and \(\hat{\eta}_y\) can be defined.

Then, by the chain rule, we have:
\begin{align}
(\bm{u}_{i}^{m})^a_x &= (\widetilde{\bm{u}}_{i}^{m})_{\xi} \, \hat{\xi}_x + (\widetilde{\bm{u}}_{i}^{m})_{\eta} \, \hat{\eta}_x, \label{eq:3.14} \\
(\bm{u}_{i}^{m})^a_y &= (\widetilde{\bm{u}}_{i}^{m})_{\xi} \, \hat{\xi}_y + (\widetilde{\bm{u}}_{i}^{m})_{\eta} \, \hat{\eta}_y. \label{eq:3.15}
\end{align}

Define inter-element jumps:
\begin{align}
\| \llbracket \bm{u}_{i} \rrbracket \|_{0, \gamma_s}^2 &= \| \widetilde{\bm{u}}_{i}^{m}(\xi, -1) - \widetilde{\bm{u}}_{i}^{n}(\xi, 1) \|_{0, I}^2,
\label{eq:3.16} \\
\| \llbracket (\bm{u}_{i})^a_x \rrbracket \|_{1/2, \gamma_s}^2 &= \| (\bm{u}_{i}^{m})^a_x(\xi, -1) - (\bm{u}_{i}^{n})^a_x(\xi, 1) \|_{1/2, I}^2,
\label{eq:3.17} \\
\| \llbracket (\bm{u}_{i})^a_y \rrbracket \|_{1/2, \gamma_s}^2 &= \| (\bm{u}_{i}^{m})^a_y(\xi, -1) - (\bm{u}_{i}^{n})^a_y(\xi, 1) \|_{1/2, I}^2,
\label{eq:3.18} \\
\| \llbracket (p_{i}) \rrbracket \|_{1/2, \gamma_s}^2 &= \| (\widetilde{p}_{i}^{m})(\xi, -1) - (\widetilde{p}_{i}^{n})(\xi, 1) \|_{1/2, I}^2.
\label{eq:3.19}
\end{align}
Here and in what follows, \(I\) is an interval \((-1,1)\).

\begin{figure}[H]
\centering
\includegraphics[scale=0.5]{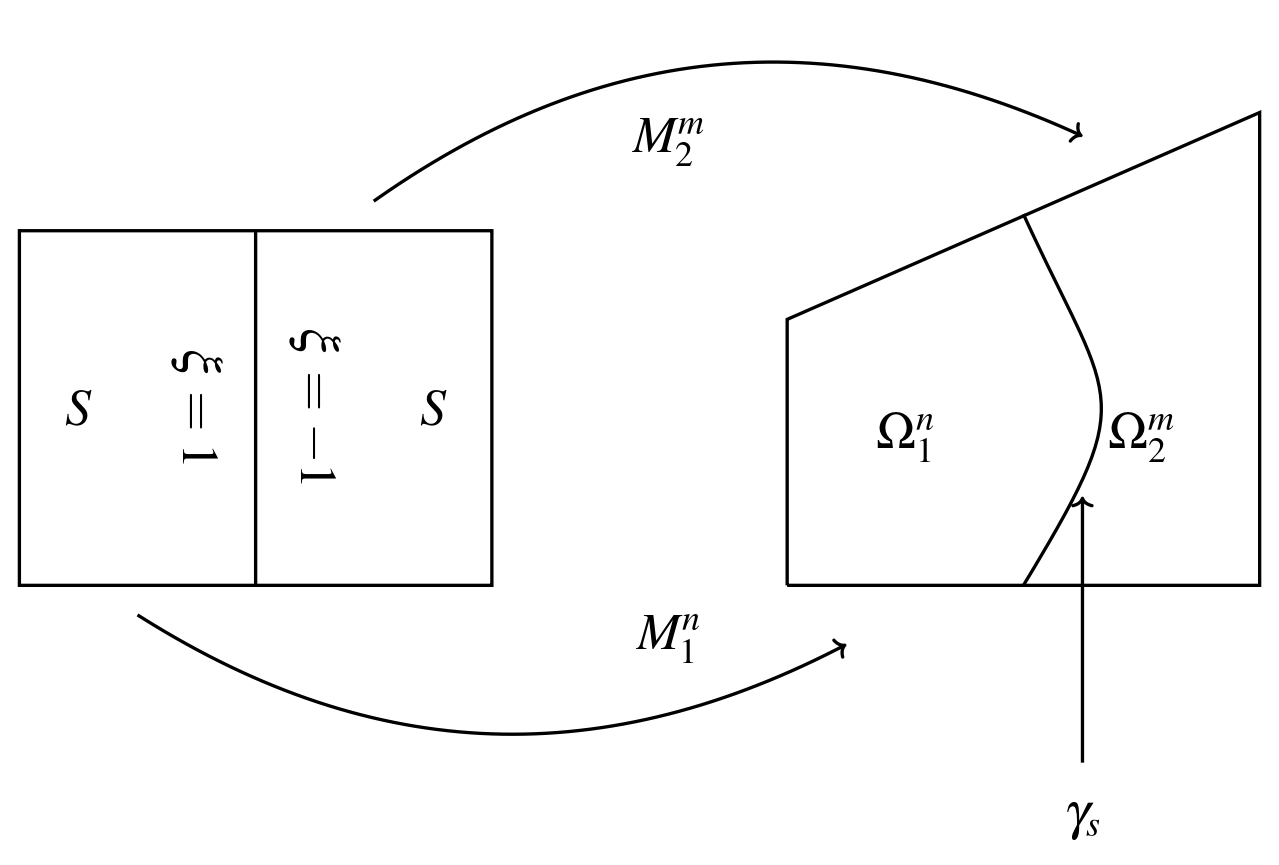}
\caption{Interface boundary transfer}
\label{fig:Interface_transfer}
\end{figure}
Now, we define the jump across the interface $\Gamma_0$. Let $\gamma_s \subseteq \Gamma_0$ be a shared edge between elements $\Omega_{1}^{n}$ and $\Omega_{2}^{m}$, with $\gamma_s$ mapped from $\xi = 1$ in $\Omega_{1}^{n}$ and $\xi = -1$ in $\Omega_{2}^{m}$ (see Figure~ (\ref{fig:Interface_transfer})). Define the interface jump as
\begin{equation}
\begin{aligned}
\| \llbracket \bm{u} \rrbracket \|_{3/2, \gamma_s}^2 &= \| \bm{u}_1 - \bm{u}_2 \|_{3/2, \gamma_s}^2 \\
&= \| \widetilde{\bm{u}}_{1}^{n}(1, \eta) - \widetilde{\bm{u}}_{2}^{m}(-1, \eta) \|_{0, I}^2 \\
&\quad + \left\| \left(\frac{\partial \widetilde{\bm{u}}_{1}^{n}}{\partial T}\right)^a(1, \eta) - \left(\frac{\partial \widetilde{\bm{u}}_{2}^{m}}{\partial T}\right)^a(-1, \eta) \right\|_{1/2, I}^2
\end{aligned}
\label{eq:3.20}
\end{equation}
where \(\frac{\partial \widetilde{\bm{u}}_{1}^{n}}{\partial T}\) and \(\frac{\partial \widetilde{\bm{u}}_{2}^{m}}{\partial T}\) are the tangential derivatives of \(\widetilde{\bm{u}}_{1}^{n}\) and \(\widetilde{\bm{u}}_{2}^{m}\), respectively, and their approximations, as defined earlier, are given by
\[
\left( \frac{\partial \widetilde{\bm{u}}_{1}^{n}}{\partial T} \right)^a
\quad \text{and} \quad
\left( \frac{\partial \widetilde{\bm{u}}_{2}^{m}}{\partial T} \right)^a.
\]

The stress terms are:
\begin{equation}
\begin{aligned}
\left( \nu_{1} \nabla \bm{u}_{1}^{n} - p_{1}^{n} \bm{I} \right)^a \bm{n}
&=
\begin{pmatrix}
\left( \nu_{1} \left(\frac{\partial u_{1}^{1,n}}{\partial x}\right)^a - \widetilde{p}^{n}_{1} \right) n_{1}
+ \nu_{1} \left(\frac{\partial u_{1}^{1,n}}{\partial y}\right)^a \, n_{2} \\[2.1ex]
\nu_{1} \left(\frac{\partial u_{2}^{1,n}}{\partial x}\right)^a \, n_{1}
+ \left( \nu_{1} \left(\frac{\partial u_{2}^{1,n}}{\partial y}\right)^a - \widetilde{p}_{1}^{n} \right) n_{2}
\end{pmatrix},
\\[2ex]
\left( \nu_{2} \nabla \bm{u}_{2}^{m} - p_{2}^{m} \bm{I} \right)^a \bm{n}
&=
\begin{pmatrix}
\left( \nu_{2} \left(\frac{\partial u_{1}^{2,m}}{\partial x}\right)^a - \widetilde{p}_{2}^{m} \right) n_{1}
+ \nu_{2} \left(\frac{\partial u_{1}^{2,m}}{\partial y}\right)^a \, n_{2} \\[2.1ex]
\nu_{2} \left(\frac{\partial u_{2}^{2,m}}{\partial x}\right)^a \, n_{1}
+ \left( \nu_{2} \left(\frac{\partial u_{2}^{2,m}}{\partial y}\right)^a - \widetilde{p}_{2}^{m} \right) n_{2}
\end{pmatrix}.
\end{aligned}
\label{eq:3.21}
\end{equation}

Using \eqref{eq:3.14}-\eqref{eq:3.15} define the norm of stress jump across \(\gamma_s \subset \Gamma_0\) as:
\begin{align}
\left\Vert \llbracket (\nu \nabla \bm{u} - p \bm{I})^a \bm{n} \rrbracket \right\Vert_{1/2, \gamma_s}^2
= \left\Vert
\left( (\nu_1 \nabla \bm{u}_1^n - p_1^n \bm{I})^a \bm{n} \right)(1, \eta)
- \left( (\nu_2 \nabla \bm{u}_2^m - p_2^m \bm{I})^a \bm{n} \right)(-1, \eta)
\right\Vert_{1/2, I}^2.
 \label{eq:3.22}
\end{align}

Now, along the boundary $\Gamma = \cup_{j=1}^{4} \Gamma_j$ (see Figure~(\ref{fig:Domain with smooth interface})), let $\gamma_s \subseteq \Gamma_j$ (for some $j$) be the image of $\xi = 1$ under the mapping $M_2^m$ from $S$ to $\Omega_2^m$. Then,
\begin{align}
\left\Vert \bm{u}_2 \right\Vert_{0, \gamma_s}^2
+ \left\Vert \Bigl(\frac{\partial \bm{u}_2}{\partial T}\Bigr)^a \right\Vert_{1/2, \gamma_s}^2
= \left\Vert \widetilde{\bm{u}}_2^m(1, \eta) \right\Vert_{0, I}^2
+ \left\Vert \Bigl(\frac{\partial \widetilde{\bm{u}}_2^m}{\partial T}\Bigr)^a(1, \eta) \right\Vert_{1/2, I}^2.
\label{eq:3.23}
\end{align}

Let \(\bm{\lambda} = (\xi, \eta)\), and let \(\{ \mathcal F_{\widetilde{\bm{u}},\widetilde p} \}\) denote the spectral element representation of the function \((\bm{u}, p)\), i.e.,
\[
\{ \mathcal F_{\widetilde{\bm{u}},\widetilde p} \} =
\left\{
\big\{ \widetilde{\bm{u}}_1^k(\bm{\lambda}) \big\}_k,\,
\big\{ \tilde{p}_1^k(\bm{\lambda}) \big\}_k,\,
\big\{ \widetilde{\bm{u}}_2^l(\bm{\lambda}) \big\}_l,\,
\big\{ \tilde{p}_2^l(\bm{\lambda}) \big\}_l
\right\}.
\]
Let
\begin{equation}
\Pi^W = \left(
\{ \widetilde{\bm{u}}_1^k(\bm{\lambda} ) \}_k,\,
\{ \tilde{p}_1^k(\bm{\lambda} ) \}_k,\,
\{ \widetilde{\bm{u}}_2^l(\bm{\lambda} ) \}_l,\,
\{ \tilde{p}_2^l(\bm{\lambda} ) \}_l
\right)
 \label{eq:3.24}
\end{equation}
denote the space of spectral element functions.

All spectral element functions are assumed to be nonconforming in the functional sense; that is, continuity is not enforced across inter-element boundaries or across the interface \(\Gamma_0\).

We define \( \mathcal F_i^{int} \) as the set of all interior edges of \( \Omega_i \), i.e.,
\[
\mathcal F_i^{int}
\;:=\;
\bigl\{
  \gamma \subset \bar{\Omega}_{i}\setminus\partial\Omega_i:\gamma = \partial\Omega_i^{\,n_1}\cap\partial\Omega_i^{\,n_2}
  \bigm|\;
  n_1\neq n_2
\bigr\}
\]
 each edge \( \gamma_l \in \mathcal F_i^{int} \) is a common edge shared between two distinct elements \( \Omega_i^{n_1} \) and \( \Omega_i^{n_2} \) of \( \Omega_i \). 
We define the stability functional \(\mathcal{V}^W\left( \{ \mathcal F_{\widetilde{\bm{u}},\widetilde p} \} \right)\) as:
\begin{align}
\mathcal{V}^W\left( \{ \mathcal F_{\widetilde{\bm{u}},\widetilde p} \} \right) &=\,
\sum_{k=1}^{m_1} \left\Vert (\mathcal{L}_1^k)^a(\widetilde{\bm{u}}_1^k, \tilde{p}_1^k) \right\Vert_{0, S}^2
+ \sum_{l=1}^{m_2} \left\Vert (\mathcal{L}_2^l)^a(\widetilde{\bm{u}}_2^l, \tilde{p}_2^l) \right\Vert_{0, S}^2 \nonumber \\
&+ \sum_{k=1}^{m_1} \left\Vert (\mathcal{D}_1^k)^a \widetilde{\bm{u}}_1^k \right\Vert_{1, S}^2
+ \sum_{l=1}^{m_2} \left\Vert (\mathcal{D}_2^l)^a \widetilde{\bm{u}}_2^l \right\Vert_{1, S}^2 \nonumber \\
&+ \sum_{i=1}^{2} \sum_{\gamma_l \in \mathcal F_i^{int}} \bigg(
\left\Vert \left\llbracket \bm{u}_i \right\rrbracket \right\Vert_{0, \gamma_l}^2
+ \left\Vert \left\llbracket \left(  \bm{u}_i \right)_x^a \right\rrbracket \right\Vert_{1/2, \gamma_l}^2
+ \left\Vert \left\llbracket \left(  \bm{u}_i \right)_y^a \right\rrbracket \right\Vert_{1/2, \gamma_l}^2  + \left\Vert \left\llbracket p_i \right\rrbracket \right\Vert_{1/2, \gamma_l}^2
\bigg) \nonumber \\
&+ \sum_{\gamma_s \subseteq \Gamma_0} \bigg(
\left\Vert \left\llbracket \bm{u} \right\rrbracket \right\Vert_{3/2, \gamma_s}^2
+ \left\Vert \left\llbracket \left( (\nu \nabla \bm{u} - p \bm{I}) \bm{n} \right)^a \right\rrbracket \right\Vert_{1/2, \gamma_s}^2
\bigg) \nonumber \\
&+ \sum_{\gamma_s \subseteq \Gamma} \bigg(
\left\Vert \bm{u}_2 \right\Vert_{0, \gamma_s}^2
+ \left\Vert \left( \frac{\partial \bm{u}_2}{\partial T} \right)^a \right\Vert_{1/2, \gamma_s}^2
\bigg).
\label{eq:3.25}
\end{align}

\begin{lemma}\label{lem:velocity-correction}
Let
\(
\left\{ \left\{ \widetilde{\bm{u}}_{1}^{k}(\xi,\eta) \right\}_{k}, \left\{ \widetilde{\bm{u}}_{2}^{l}(\xi,\eta) \right\}_{l} \right\} \in \Pi^{W}.
\)
Then there exists
\(
\left\{ \left\{ \tilde{\bm{v}}_{1}^{k}(\xi,\eta) \right\}_{k}, \left\{ \tilde{\bm{v}}_{2}^{l}(\xi,\eta) \right\}_{l} \right\},
\)\\
where \( \tilde{\bm{v}}_{1}^{k} = \bm{0} \) on \( \Gamma_0 \text{ }\forall \text{ }k = 1,2,\dots,m_1 \), and \( \tilde{\bm{v}}_{2}^{l} = \bm{0} \) on \( \Gamma_0 \text{ } \forall \text{ }  l = 1,2,\dots,m_2 \).

such that
\[
\bm{v}_{1}^{k} \in \bm{H}^{2}(\Omega_{1}^{k}), \quad \bm{v}_{2}^{l} \in \bm{H}^{2}(\Omega_{2}^{l}),
\]
and
\[
\bm{w}_1 = \bm{u}_1 + \bm{v}_1 \in \bm{H}^{2}(\Omega_1), \quad \bm{w}_2 = \bm{u}_2 + \bm{v}_2 \in \bm{H}^{2}(\Omega_2).
\]

Moreover, the following estimate holds:
\begin{align}
& \sum_{k=1}^{m_1} \left\Vert \tilde{\bm{v}}_1^k(\xi,\eta) \right\Vert_{2,S}^2 
+ \sum_{l=1}^{m_2} \left\Vert \tilde{\bm{v}}_2^l(\xi,\eta) \right\Vert_{2,S}^2 \nonumber \\
& \leq c (\ln W)^2 
\sum_{i=1}^{2} \sum_{\gamma_l \in \mathcal F_i^{int}} \Big(
\left\Vert \llbracket \bm{u}_i \rrbracket \right\Vert_{0,\gamma_l}^2
+ \left\Vert \llbracket (\bm{u}_i)^a_x \rrbracket \right\Vert_{1/2,\gamma_l}^2
+ \left\Vert \llbracket (\bm{u}_i)^a_y \rrbracket \right\Vert_{1/2,\gamma_l}^2
\Big).
\label{eq:3.26}
\end{align}
\end{lemma}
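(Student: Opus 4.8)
The plan is to build the correction functions $\tilde{\bm v}_i^l$ element by element, locally on each reference square $S$, by solving a polynomial lifting problem whose boundary data encodes exactly the inter-element velocity jumps listed on the right-hand side of \eqref{eq:3.26}. The construction should mimic the corresponding step in the nonconforming spectral element literature for elliptic interface problems (e.g. \cite{kishnaga,arbazsubham,kishsubham}); the present lemma is the vector-valued analogue where in addition we must guarantee the vanishing-on-$\Gamma_0$ side condition so that the corrected field $\bm w_i = \bm u_i + \bm v_i$ is globally $\bm H^2$ in $\Omega_i$ without disturbing anything on the physical interface.

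First I would fix $i$ and an interior edge $\gamma_l \in \mathcal F_i^{int}$ shared by $\Omega_i^{n_1}$ and $\Omega_i^{n_2}$. Using the trace/extension theory on $S$ (the polynomial inverse-trace theorem of \cite{schwab}, together with the fractional-norm machinery already set up in the Introduction for $\|\cdot\|_{1/2,I}$ and $\|\cdot\|_{1+s,I}$), I would construct a tensor-product polynomial $\tilde{\bm\phi}$ of degree $W$ on $S$ whose trace and first normal/tangential derivative traces on the edge $\eta=\pm1$ equal one half of the jumps $\llbracket \bm u_i\rrbracket$, $\llbracket(\bm u_i)_x^a\rrbracket$, $\llbracket(\bm u_i)_y^a\rrbracket$ across $\gamma_l$ (split symmetrically between the two adjacent elements), and which vanishes identically on the remaining three sides of $S$; the $H^2(S)$ norm of this lifting is controlled by the $H^{3/2}$-type trace norm of the data, which in turn is bounded by the edge jump norms with the standard logarithmic loss $(\ln W)^2$ coming from the polynomial (discrete) trace inequality. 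Summing the per-edge liftings over all $\gamma_l\in\mathcal F_i^{int}$ touching a given element $\Omega_i^l$ gives $\tilde{\bm v}_i^l$ on that element; since each element has finitely many sides (it is a quadrilateral), this is a finite sum and the norm estimate \eqref{eq:3.26} follows after summation over $l$ and $i$. By construction, on each interior edge the corrected functions $\bm w_i^{n_1}$ and $\bm w_i^{n_2}$ agree in value and in first derivatives, so $\bm w_i \in \bm H^2(\Omega_i)$; and because all edges lying on $\Gamma_0$ are explicitly excluded from $\mathcal F_i^{int}$, none of the liftings have support reaching $\Gamma_0$, whence $\tilde{\bm v}_i^l = \bm 0$ on $\Gamma_0$ as required. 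Mapping back through $M_i^l$ preserves $H^2$-regularity since the maps are smooth and invertible, giving $\bm v_i^l \in \bm H^2(\Omega_i^l)$.

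The main obstacle is the $H^2$-conformity bookkeeping at the \emph{vertices} of the mesh, where several edges (and hence several per-edge liftings) meet: one must check that the symmetric splitting of jumps is consistent around each interior vertex so that the sum of liftings is well-defined and that no spurious vertex mismatch is introduced in the first derivatives. This is handled exactly as in the scalar NSEM analysis — one works with liftings that are engineered to vanish to first order at the two endpoints of their edge, so that contributions from different edges do not interfere at shared vertices — but it is the step requiring the most care. A secondary technical point is that the $x$- and $y$-derivative jumps are defined via the \emph{approximated} chain-rule expressions \eqref{eq:3.14}–\eqref{eq:3.15} involving $\hat\xi_x,\hat\eta_x,$ etc., rather than the exact geometry; one absorbs the resulting discrepancy using the remark that these polynomial approximations are spectrally accurate, so it does not affect the $(\ln W)^2$ bound. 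With the vertex compatibility and the trace-lifting estimates in hand, \eqref{eq:3.26} is obtained by the triangle inequality and a finite overlap argument.
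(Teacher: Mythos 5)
Your overall strategy (local polynomial liftings of the inter-element jump data, symmetric splitting between the two neighbouring elements, zero data on the remaining sides of $S$ so that nothing touches $\Gamma_0$, and $(\ln W)$-type losses from polynomial trace inequalities) is the same family of argument the paper uses, following Lemma~7.1 of \cite{pd1}. However, there is a genuine gap at exactly the point you flag as delicate, and the mechanism you propose there does not work as stated. You ask each per-edge lifting simultaneously to (i) have edge trace (and first-derivative traces) equal to one half of $\llbracket \bm{u}_i \rrbracket$, $\llbracket (\bm{u}_i)^a_x \rrbracket$, $\llbracket (\bm{u}_i)^a_y \rrbracket$ on $\gamma_l$, and (ii) ``vanish to first order at the two endpoints of the edge'' so that liftings from different edges do not interfere at shared vertices. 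These two requirements are incompatible whenever the jumps are nonzero at a vertex, which is the generic situation for nonconforming spectral element functions. If you enforce (ii), the lifting cannot carry the prescribed data near the endpoints, the corrected traces of $\bm{w}_i$ from the two adjacent elements fail to match in a neighbourhood of each vertex, and the conclusion $\bm{w}_i \in \bm{H}^{2}(\Omega_i)$ is lost; if you drop (ii), the liftings from edges meeting at a vertex interfere and, moreover, a lifting with nonzero endpoint data cannot in general be taken to vanish on the other three sides of $S$, which is what your argument relies on to get $\tilde{\bm{v}}_i^l = \bm{0}$ on $\Gamma_0$.

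The missing idea is the two-stage construction in the paper's proof. First one applies vertex corrections $\tilde{\bm{r}}_i^k$ (polynomials of degree at most four, vanishing on $\Gamma_0$) chosen so that after correction the function values and the $x$- and $y$-derivatives at every shared vertex $P_j$ equal the \emph{average} over all elements meeting at $P_j$; the size of these corrections is controlled through $\left\Vert q \right\Vert_{L^{\infty}(\bar I)}^{2} \leq C (\ln W) \left\Vert q \right\Vert_{1/2,I}^{2}$ (Theorem~4.79 of \cite{schwab}), which is one source of the $(\ln W)^2$ factor. Only after this stage do the residual edge mismatches of $\tilde{\bm{y}}_i^k = \widetilde{\bm{u}}_i^k + \tilde{\bm{r}}_i^k$ vanish to first order at the edge endpoints, and then the edge corrections $\tilde{\bm{s}}_i^k$ (matching the averaged edge traces, again zero on $\Gamma_0$) can be built with the required endpoint behaviour and estimated via Theorem~1.5.2.4 of \cite{grisvard} and Theorem~4.82 of \cite{schwab}. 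Your proposal cites the right literature but skips this vertex-correction stage, and without it the $H^2$-conformity of $\bm{w}_i$ and the bound \eqref{eq:3.26} do not follow. Your handling of the approximated chain-rule factors $\hat{\xi}_x, \hat{\eta}_x$, etc., is fine and matches the paper's remark that these errors are spectrally small.
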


\begin{proof}
We give a rough sketch of the proof. For the more details of the proof, see Lemma~(7.1) in~\cite{pd1}.

Since
\begin{align*}
&\bigl\{ \{\widetilde{\bm{u}}_{1}^{k}(\xi,\eta)\}_{k}, \{\widetilde{\bm{u}}_{2}^{l}(\xi,\eta)\}_{l} \bigr\}
\end{align*}
are non-conforming, we apply corrections
\begin{align*}
&\bigl\{ \{\tilde{\bm{r}}_{1}^{k}(\xi,\eta)\}_{k}, \{\tilde{\bm{r}}_{2}^{l}(\xi,\eta)\}_{l} \bigr\}
\end{align*}
at the common vertices of the elements in \( \Omega_1 \) and \( \Omega_2 \) such that
\[
\tilde{\bm{r}}_1^k = \bm{0} \quad \text{and} \quad \tilde{\bm{r}}_2^l = \bm{0} \quad \text{on } \Gamma_0.
\]

Consider an element \( \Omega_i^k \subset \Omega_i \). There exists an analytical invertible map \( M_{i}^{k} : S \rightarrow \Omega_{i}^{k} \), as defined in Section~(\ref{sec:discretization}), such that \( S = (M_{i}^{k})^{-1}(\Omega_{i}^{k}) \).

Let \( P_j \), for \( j = 1, \ldots, 4 \), denote the vertices of the reference square \( S \), and define
\[
\tilde{\bm{r}}_1^k(\xi,\eta) = 
\begin{pmatrix}
\tilde{r}_1^{1,k} \\
\tilde{r}_2^{1,k}
\end{pmatrix}, \quad
\tilde{\bm{r}}_2^l(\xi,\eta) = 
\begin{pmatrix}
\tilde{r}_1^{2,l} \\
\tilde{r}_2^{2,l}
\end{pmatrix}.
\]

We construct the set of polynomials
\[
\left\{ \left\{ \tilde{\bm{r}}_{1}^{k}(\xi,\eta) \right\}_{k}, \left\{ \tilde{\bm{r}}_{2}^{l}(\xi,\eta) \right\}_{l} \right\}
\]
such that, for \( j = 1, \dots, 4 \) and \( i = 1, 2 \),
\begin{align*}
\left( \widetilde{\bm{u}}_i^k + \tilde{\bm{r}}_i^k \right)(P_j) &= \bar{\bm{u}}(P_j), \\
\left( (\widetilde{\bm{u}}_i^k)_x + (\tilde{\bm{r}}_i^k)_x \right)(P_j) &= \bar{\bm{u}}_x(P_j), \\
\left( (\widetilde{\bm{u}}_i^k)_y + (\tilde{\bm{r}}_i^k)_y \right)(P_j) &= \bar{\bm{u}}_y(P_j)
\end{align*}
Here, \( \bar{z} \) denotes the average of the values of \( z \) at \( P_j \) over all elements that have \( P_j \) as a vertex. Moreover, \( \tilde{\bm{r}}_{i}^{k} \) is a polynomial of degree less than or equal to four, and it satisfies the estimate
\begin{equation}
\left\Vert \tilde{\bm{r}}_{i}^{k}(\xi,\eta) \right\Vert_{2, S}^{2}
\leq 
C \sum_{l=1}^{4} \left( \left| \bm{a}_{l} \right|^{2} + \left| \bm{b}_{l} \right|^{2} + \left| \bm{c}_{l} \right|^{2} \right), \quad \text{for } i = 1, 2
\label{eq:3.27}
\end{equation}

where
\[
\begin{aligned}
\tilde{\bm{r}}_{i}^{k}(P_l) &= \bm{a}_l = 
\begin{pmatrix}
a_{1,l} \\
a_{2,l}
\end{pmatrix}, \\
\left( \tilde{\bm{r}}_{i}^{k} \right)_x(P_l) &= \bm{b}_l = 
\begin{pmatrix}
b_{1,l} \\
b_{2,l}
\end{pmatrix}, \\
\left( \tilde{\bm{r}}_{i}^{k} \right)_y(P_l) &= \bm{c}_l = 
\begin{pmatrix}
c_{1,l} \\
c_{2,l}
\end{pmatrix}, \quad \text{for } l = 1, \dots, 4;i = 1,2.
\end{aligned}
\]

\[
\begin{aligned}
\left| \bm{a}_{l} \right|^{2} &= \left| a_{1,l} \right|^{2} + \left| a_{2,l} \right|^{2}, \\
\left| \bm{b}_{l} \right|^{2} &= \left| b_{1,l} \right|^{2} + \left| b_{2,l} \right|^{2}, \\
\left| \bm{c}_{l} \right|^{2} &= \left| c_{1,l} \right|^{2} + \left| c_{2,l} \right|^{2}.
\end{aligned}
\]
Using the inequality (Theorem 4.79 of \cite{schwab})
\[
\left\Vert q \right\Vert_{L^{\infty}(\bar{I})}^{2} \leq C \, (\ln W) \left\Vert q \right\Vert_{1/2, I}^{2},
\]
where \( C \) is a constant and \( q(y) \) is a polynomial of degree \( W \) defined on \( I = (-1,1) \), we obtain:
\[
\sum_{k=1}^{m_1} \| \tilde{\bm{r}}_1^k \|_{2,S}^2 + \sum_{l=1}^{m_2} \| \tilde{\bm{r}}_2^l \|_{2,S}^2 
\leq K (\ln W)^2 
\sum_{i=1}^{2} \sum_{\gamma_l \in \mathcal F_i^{int}} \Big(
\left\Vert \llbracket \bm{u}_i \rrbracket \right\Vert_{0,\gamma_l}^2
+ \left\Vert \llbracket (\bm{u}_i)^a_x \rrbracket \right\Vert_{1/2,\gamma_l}^2
+ \left\Vert \llbracket (\bm{u}_i)^a_y \rrbracket \right\Vert_{1/2,\gamma_l}^2
\Big).
\]

Let
\[
\tilde{\bm{y}}_{1}^{k}(\xi,\eta) = \widetilde{\bm{u}}_{1}^{k}(\xi,\eta) + \tilde{\bm{r}}_{1}^{k}(\xi,\eta), \quad
\tilde{\bm{y}}_{2}^{l}(\xi,\eta) = \widetilde{\bm{u}}_{2}^{l}(\xi,\eta) + \tilde{\bm{r}}_{2}^{l}(\xi,\eta).
\]

Now we define a correction
\[
\left\{ \left\{ \tilde{\bm{s}}_{1}^{k}(\xi,\eta) \right\}_{k}, \left\{ \tilde{\bm{s}}_{2}^{l}(\xi,\eta) \right\}_{l} \right\}
\]
on the edges of \( S = (M_{i}^{m})^{-1}(\Omega_{i}^{m}) \) for all \( i \) and \( m \), such that
\[
\tilde{\bm{s}}_{1}^{k} = \bm{0} \quad \text{and} \quad \tilde{\bm{s}}_{2}^{l} = \bm{0} \quad \text{on } \Gamma_0,
\]
with \( \tilde{\bm{s}}_{1}^{k}, \tilde{\bm{s}}_{2}^{l} \in \bm{H}^{2}(S) \) and,

\begin{equation}
\begin{aligned}
(\tilde{\bm{y}}_{i}^{k} + \tilde{\bm{s}}_{i}^{k})(t) &= \bar{\bm{y}}(t), \\
(\tilde{\bm{y}}_{i}^{k})_x + (\tilde{\bm{s}}_{i}^{k})_x(t) &= \bar{\bm{y}}_x(t), \\
(\tilde{\bm{y}}_{i}^{k})_y + (\tilde{\bm{s}}_{i}^{k})_y(t) &= \bar{\bm{y}}_y(t).
\end{aligned}
\label{eq:3.28}
\end{equation}

Where, \(t\) is a point on the side \(\gamma_l\) of \(S\) and \(\bar{\bm{y}}\) represents the averages of the values \(\widetilde{\bm{y}}_i^l\) at \(t\) over all elements which have \(\gamma_l\) as a common side. Similarly, \(\bar{\bm{y}}_x,\bar{\bm{y}}_y\) represents average values of \((\widetilde{\bm{y}}_i^l)_x,(\widetilde{\bm{y}}_i^l)_y\) respectively.



Further details can be found in \cite{pd1}.

Using Theorem 1.5.2.4 of~ \cite{grisvard} , Theorem 4.82 in
\cite{schwab} , we conclude:
\[
\sum_{k=1}^{m_1} \left\Vert \tilde{\bm{s}}_1^k \right\Vert_{2,S}^2 + \sum_{l=1}^{m_2} \left\Vert \tilde{\bm{s}}_2^l \right\Vert_{2,S}^2 
\leq K (\ln W)^2 
\sum_{i=1}^{2} \sum_{\gamma_l \in \mathcal F_i^{\text{int}}} \Big(
\left\Vert \llbracket \bm{u}_i \rrbracket \right\Vert_{0,\gamma_l}^2
+ \left\Vert \llbracket (\bm{u}_i)^a_x \rrbracket \right\Vert_{1/2,\gamma_l}^2
+ \left\Vert \llbracket (\bm{u}_i)^a_y \rrbracket \right\Vert_{1/2,\gamma_l}^2
\Big).
\]

Finally, define
\[
\tilde{\bm{v}}_{1}^{k}(\xi,\eta) = \tilde{\bm{y}}_{1}^{k}(\xi,\eta) + \tilde{\bm{s}}_{1}^{k}(\xi,\eta), \quad
\tilde{\bm{v}}_{2}^{l}(\xi,\eta) = \tilde{\bm{y}}_{2}^{l}(\xi,\eta) + \tilde{\bm{s}}_{2}^{l}(\xi,\eta).
\]
Now, the proof of Lemma~(\ref{lem:velocity-correction}) follows from the above inequalities.
\end{proof}

\begin{lemma}\label{lem:pressure-correction}
Let 
\[
\left\{ \left\{ \tilde{p}_{1}^{k}(\xi,\eta) \right\}_{k}, \left\{ \tilde{p}_{2}^{l}(\xi,\eta) \right\}_{l} \right\} \in \Pi^{W}.
\]
Then there exists 
\[
\left\{ \left\{ \tilde{o}_{1}^{k}(\xi,\eta) \right\}_{k}, \left\{ \tilde{o}_{2}^{l}(\xi,\eta) \right\}_{l} \right\}
\]
such that
\[
\tilde{o}_{1}^{k} = 0 \text{ on } \Gamma_{0} \quad \text{for all } k=1,2,\ldots,m_1,\quad
\tilde{o}_{2}^{l} = 0 \text{ on } \Gamma_{0} \quad \text{for all } l=1,2,\ldots,m_2,
\]
and
\[
o_{1}^{k} \in H^{1}(\Omega_{1}^{k}), \quad o_{2}^{l} \in H^{1}(\Omega_{2}^{l}), \quad 
p_{1}^{*} = p_{1} + o_{1} \in H^{1}(\Omega_{1}), \quad p_{2}^{*} = p_{2} + o_{2} \in H^{1}(\Omega_{2}).
\]

Moreover, the estimate
\begin{equation}
\sum_{k=1}^{m_{1}} \left\Vert \tilde{o}_{1}^{k}(\xi,\eta) \right\Vert_{1,S}^{2}
+
\sum_{l=1}^{m_{2}} \left\Vert \tilde{o}_{2}^{l}(\xi,\eta) \right\Vert_{1,S}^{2}
\leq
C (\ln W)^{2} \left( \sum_{i=1}^{2} \sum_{\gamma_s \in \mathcal F_i^{\text{int}}} \left\Vert \llbracket p_i \rrbracket \right\Vert_{\frac{1}{2}, \gamma_s}^{2} \right),
\label{eq:3.29}
\end{equation}
holds.
\end{lemma}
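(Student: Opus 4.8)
\textit{Proof plan.} The argument parallels that of Lemma~\ref{lem:velocity-correction}, but is simpler: to make the pressure $H^1$-conforming it suffices to remove the mismatch in \emph{function values} (not in derivatives) across the interior edges of each subdomain. I would proceed in two stages, a vertex correction followed by an edge correction, arranging at every stage that the corrections vanish identically on $\Gamma_0$ (so that the interface jump, which is penalised separately in the functional, is left untouched).

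First I would handle the vertices. For a vertex $P$ of the mesh of $\Omega_i$ that does not lie on $\Gamma_0$, let $\bar p(P)$ be the average of the values $p_i^k(P)$ over all elements of $\Omega_i$ having $P$ as a vertex; for a vertex $P\in\Gamma_0$ the value is kept unchanged, i.e.\ the vertex correction is set to $0$ there. On the reference square $S$ one then builds, for each element, a polynomial $\tilde q_i^k$ of degree at most one in each variable whose values at the four vertices of $S$ equal the prescribed mismatches $\bar p(P_j)-\tilde p_i^k(P_j)$ (and $0$ at vertices sitting on $\Gamma_0$). Such a correction satisfies $\|\tilde q_i^k\|_{1,S}^2 \le C\sum_{j=1}^4|\bar p(P_j)-\tilde p_i^k(P_j)|^2$. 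Applying the trace inequality $\|q\|_{L^\infty(\bar I)}^2\le C(\ln W)\|q\|_{1/2,I}^2$ (Theorem~4.79 of~\cite{schwab}) to the polynomial traces of the $\tilde p_i^k$ along the edges meeting at each vertex controls the vertex mismatches by $(\ln W)$ times the $H^{1/2}$-norms of the pressure jumps along interior edges, giving
\[
\sum_{k=1}^{m_1}\bigl\Vert \tilde q_1^k\bigr\Vert_{1,S}^2 + \sum_{l=1}^{m_2}\bigl\Vert \tilde q_2^l\bigr\Vert_{1,S}^2
\le C(\ln W)\sum_{i=1}^{2}\sum_{\gamma_s\in\mathcal F_i^{\text{int}}}\bigl\Vert\llbracket p_i\rrbracket\bigr\Vert_{1/2,\gamma_s}^2 .
\]

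Next I would correct along the edges. Set $\tilde z_i^k=\tilde p_i^k+\tilde q_i^k$; these polynomials now agree at every common vertex of $\Omega_i$ and are unchanged on $\Gamma_0$. For each interior edge $\gamma$ of $\Omega_i$ shared by $\Omega_i^{n_1},\Omega_i^{n_2}$, let $\bar z$ be the average of $\tilde z_i^{n_1}$ and $\tilde z_i^{n_2}$ along $\gamma$; each mismatch $\tilde z_i^{n_j}|_\gamma-\bar z=\pm\tfrac12\llbracket z_i\rrbracket$ is a polynomial of degree $W$ that vanishes at the two endpoints of $\gamma$, hence lies in $H^{1/2}_{00}(\gamma)$. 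Using Theorem~1.5.2.4 of~\cite{grisvard} together with Theorem~4.82 of~\cite{schwab}, each mismatch lifts to a function $\tilde e_i^{n_j}\in H^1(S)$ that vanishes on all remaining edges of $S$ — in particular on any edge lying on $\Gamma_0$ — with $\|\tilde e_i^{n_j}\|_{1,S}^2\le C(\ln W)\|\tilde z_i^{n_j}|_\gamma-\bar z\|_{1/2,I}^2$, the logarithmic factor arising from the polynomial equivalence of the $H^{1/2}_{00}(I)$- and $H^{1/2}(I)$-norms. Since $\llbracket z_i\rrbracket=\llbracket p_i\rrbracket+\llbracket q_i\rrbracket$ and $\|\llbracket q_i\rrbracket\|_{1/2,\gamma}^2$ is already bounded by $C(\ln W)$ times the pressure jumps (the $\tilde q_i^k$ being affine on each edge with endpoint values the estimated vertex mismatches), summing over all interior edges gives a bound with a further factor $(\ln W)$, i.e.\ $(\ln W)^2$ in total.

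Finally I would set $\tilde o_i^k=\tilde q_i^k+\tilde e_i^k$, so that $o_i^k\in H^1(\Omega_i^k)$, $\tilde o_i^k=0$ on $\Gamma_0$, and the corrected pressure $p_i^*=p_i+o_i$ is continuous across every interior edge of $\Omega_i$ and therefore belongs to $H^1(\Omega_i)$; adding the two estimates yields~\eqref{eq:3.29}. I expect the main obstacle to be the edge-lifting step: one must keep careful track of which mesh vertices and edges lie on $\Gamma_0$ so the corrections genuinely vanish there, and one must control the lift of an edge polynomial forced to vanish at its endpoints, which lives in the Lions--Magenes space $H^{1/2}_{00}$ rather than $H^{1/2}$; the norm equivalence between these two spaces for degree-$W$ polynomials is exactly where the $(\ln W)^2$ factor originates. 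As in Lemma~\ref{lem:velocity-correction}, the remaining details follow the arguments of~\cite{pd1}.
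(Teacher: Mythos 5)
Your construction is correct and is essentially the argument the paper relies on: its own proof is a one-line citation to Lemma A.3 of \cite{S1}, whose content is exactly the two-stage (vertex, then edge) correction you describe, run in $H^1$ rather than $H^2$ so that only function-value mismatches need to be removed, with corrections forced to vanish on $\Gamma_0$ and the two logarithmic factors arising from the polynomial trace inequality at vertices and the $H^{1/2}_{00}$--$H^{1/2}$ equivalence in the edge lifting, just as in the paper's Lemma~\ref{lem:velocity-correction}. No gap to report.
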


\begin{proof}
The proof is very similar to Lemma A.3 of~\cite{S1}.
\end{proof}

\begin{lemma}\label{lem:trace-boundary}
Let \( \bm{w}_i = \bm{u}_i + \bm{v}_i \in \bm{H}^{2}(\Omega_i) \) for \( i = 1, 2 \), where
\[
\left\{ \left\{ \widetilde{\bm{u}}_{1}^{k}(\xi, \eta) \right\}_k, \left\{ \widetilde{\bm{u}}_{2}^{l}(\xi, \eta) \right\}_l \right\} \in \Pi^W
\]
and 
\[
\left\{ \left\{ \tilde{\bm{v}}_{1}^{k}(\xi, \eta) \right\}_k, \left\{ \tilde{\bm{v}}_{2}^{l}(\xi, \eta) \right\}_l \right\}
\]
is as defined in Lemma~(\ref{lem:velocity-correction}). Then the following estimate holds:
\begin{align}
\left\Vert \bm{w}_2 \right\Vert_{\frac{3}{2}, \Gamma}^{2}
\;\leq\; & \; C\,(\ln W)^2 \Bigg(
\sum_{\gamma_s \subseteq \Gamma}
\Big(
\left\Vert \bm{u}_2 \right\Vert_{0, \gamma_s}^2
+
\left\Vert \left(\tfrac{\partial \bm{u}_2}{\partial T}\right)^a \right\Vert_{\frac{1}{2}, \gamma_s}^2
\Big) \notag \\
& \quad +
\sum_{i=1}^{2} \sum_{\gamma_l \in \mathcal{F}_i^{\mathrm{int}}}
\Big(
\left\Vert \llbracket \bm{u}_i \rrbracket \right\Vert_{0,\gamma_l}^2
+ \left\Vert \llbracket (\bm{u}_i)^a_x \rrbracket \right\Vert_{\frac{1}{2},\gamma_l}^2
+ \left\Vert \llbracket (\bm{u}_i)^a_y \rrbracket \right\Vert_{\frac{1}{2},\gamma_l}^2
\Big)
\Bigg).
\label{eq:3.30}
\end{align}

\end{lemma}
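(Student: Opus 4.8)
The plan is to establish the boundary trace estimate by combining the correction from Lemma~(\ref{lem:velocity-correction}) with standard trace and polynomial inverse inequalities on the reference square, and then summing over the elements touching $\Gamma$. First I would recall that $\bm{w}_2 = \bm{u}_2 + \bm{v}_2 \in \bm{H}^2(\Omega_2)$, so the trace $\bm{w}_2|_{\Gamma}$ is well-defined in $\bm{H}^{3/2}(\Gamma)$ by the trace theorem, and that it suffices to bound the trace over each boundary edge $\gamma_s \subseteq \Gamma$ separately. Fix such a $\gamma_s$, say the image of $\xi = 1$ under $M_2^m$. On that edge, write $\bm{w}_2 = \bm{u}_2^m + \bm{v}_2^m$ (where on $\gamma_s$ the function $\bm{w}_2$ coincides with the single element contribution, since corrections only redistribute vertex/edge data), and split $\left\Vert \bm{w}_2 \right\Vert_{3/2,\gamma_s}^2 \leq 2\left\Vert \bm{u}_2^m \right\Vert_{3/2,\gamma_s}^2 + 2\left\Vert \bm{v}_2^m \right\Vert_{3/2,\gamma_s}^2$.

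For the $\bm{v}_2^m$ term, I would use the trace inequality $\left\Vert \tilde{\bm{v}}_2^m(1,\eta) \right\Vert_{3/2,I}^2 \leq C \left\Vert \tilde{\bm{v}}_2^m \right\Vert_{2,S}^2$ on the reference square, then invoke the estimate~\eqref{eq:3.26} from Lemma~(\ref{lem:velocity-correction}) to bound $\sum_m \left\Vert \tilde{\bm{v}}_2^m \right\Vert_{2,S}^2$ by $c(\ln W)^2$ times the inter-element jump terms appearing on the right-hand side of \eqref{eq:3.30}; the mapping $M_2^m$ being analytic and invertible transfers this between $\gamma_s$ and $I$ with constants independent of $W$. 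For the $\bm{u}_2^m$ term, the $H^{3/2}(\gamma_s)$ norm of the trace decomposes, after pulling back to $I=(-1,1)$ via $M_2^m$, into the $L^2$ part $\left\Vert \widetilde{\bm{u}}_2^m(1,\eta) \right\Vert_{0,I}^2$ and the $H^{1/2}$ part of the tangential derivative, which is controlled by $\left\Vert \left(\tfrac{\partial \widetilde{\bm{u}}_2^m}{\partial T}\right)^a(1,\eta) \right\Vert_{1/2,I}^2$ up to the spectrally small approximation error (which the remark after Lemma~(\ref{lem:pressure-correction}) permits us to drop). Summing over all $\gamma_s \subseteq \Gamma$ gives exactly the first group of terms on the right of~\eqref{eq:3.30}. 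Here the factor $(\ln W)^2$ could also enter through the inverse inequality $\left\Vert q \right\Vert_{L^\infty(\bar I)}^2 \leq C(\ln W)\left\Vert q \right\Vert_{1/2,I}^2$ used when matching traces of the approximated derivatives against the exact ones.

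The main obstacle I anticipate is the careful bookkeeping of how the derivative part of the $H^{3/2}$ trace interacts with the polynomial approximations $(\cdot)^a$: the tangential derivative of $\bm{w}_2$ along $\Gamma$ is $\tfrac{\partial \bm{u}_2}{\partial T} + \tfrac{\partial \bm{v}_2}{\partial T}$, but the stability functional only contains $\left(\tfrac{\partial \bm{u}_2}{\partial T}\right)^a$, so one must absorb the difference between the true tangential derivative and its degree-$W$ projection into the spectrally negligible error, while simultaneously ensuring that the chain-rule factors $\hat\xi_x,\hat\eta_x,\dots$ from \eqref{eq:3.14}--\eqref{eq:3.15} do not degrade the estimate. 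This is essentially the argument underlying Theorem~4.82 of~\cite{schwab} combined with Theorem~1.5.2.4 of~\cite{grisvard} for the fractional-order trace, applied elementwise and then glued; since $\bm{w}_2 \in \bm{H}^2(\Omega_2)$ globally, no additional jump terms across interior edges of $\Omega_2$ survive beyond those already collected from Lemma~(\ref{lem:velocity-correction}), and the proof concludes by collecting the two groups of bounds.
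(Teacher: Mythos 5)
Your proposal is correct and follows essentially the same route the paper relies on: the paper's proof is simply a citation of Lemma~7.2 in~\cite{pd1}, and your sketch (elementwise splitting $\bm{w}_2=\bm{u}_2+\bm{v}_2$ on boundary edges, trace estimates for the corrections via \eqref{eq:3.26}, control of the $\bm{u}_2$ part by the boundary terms of the functional, and gluing across boundary vertices via Theorem~1.5.2.4 of~\cite{grisvard} and Theorem~4.82 of~\cite{schwab}, with approximation errors treated as spectrally small) is precisely the argument behind that cited lemma.
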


\begin{proof}
The result follows from Lemma~7.2 in~\cite{pd1}.
\end{proof}
Let
\[
\bigl\{\mathcal F_{\widetilde{\bm u},\widetilde p}\bigr\}
=
\Bigl\{
  \{\widetilde{\bm u}_{1}^{\,l}(\xi,\eta)\}_{l},\;
  \{\tilde p_{1}^{\,l}(\xi,\eta)\}_{l},\;
  \{\widetilde{\bm u}_{2}^{\,k}(\xi,\eta)\}_{k},\;
  \{\tilde p_{2}^{\,k}(\xi,\eta)\}_{k}
\Bigr\}
\in\Pi^{W}.
\]
Define the quadratic form
\(\mathcal{Q}^{W}\bigl(\mathcal F_{\widetilde{\bm u},\widetilde p}\bigr)\) by
\begin{equation}\label{eq:quadratic}
\begin{aligned}
\mathcal{Q}^{W}\bigl(\mathcal F_{\widetilde{\bm u},\widetilde p}\bigr)
:=\;&
\sum_{k=1}^{m_1}
    \bigl\|
        \widetilde{\bm u}_{1}^{\,k}(\xi,\eta)
    \bigr\|_{2,S}^{2}
+
\sum_{l=1}^{m_2}
    \bigl\|
        \widetilde{\bm u}_{2}^{\,l}(\xi,\eta)
    \bigr\|_{2,S}^{2} \\[0.5em]
&+
\sum_{k=1}^{m_1}
    \bigl\|
        \tilde p_{1}^{\,k}(\xi,\eta)
    \bigr\|_{1,S}^{2}
+
\sum_{l=1}^{m_2}
    \bigl\|
        \tilde p_{2}^{\,l}(\xi,\eta)
    \bigr\|_{1,S}^{2}.
\end{aligned}
\end{equation}

\begin{theorem}[Stability Estimate]\label{thm:stability}
For \( W \) large enough, there exists a constant \( C > 0 \) such that
\begin{equation} 
\mathcal{Q}^W(\{\mathcal F_{\widetilde{\bm{u}},\widetilde p}\}) \leq C (\ln W)^2 \, \mathcal{V}^W(\{\mathcal F_{\widetilde{\bm{u}},\widetilde p}\}), \quad \forall \mathcal F_{\widetilde{\bm{u}},\widetilde p} \in \Pi^W.
\label{eq:3.32}
\end{equation}
\end{theorem}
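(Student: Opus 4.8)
The plan is to deduce the discrete inequality \eqref{eq:3.32} from the continuous a~priori estimate \eqref{eq:3.7}, after replacing the nonconforming spectral element functions by surrogates that are conforming inside each subdomain. Given $\{\mathcal F_{\widetilde{\bm u},\widetilde p}\}\in\Pi^W$, I first pass to the physical fields $\bm u_i^l=\widetilde{\bm u}_i^l\circ(M_i^l)^{-1}$, $p_i^l=\tilde p_i^l\circ(M_i^l)^{-1}$, and invoke Lemma~(\ref{lem:velocity-correction}) and Lemma~(\ref{lem:pressure-correction}) to produce corrections $\bm v_i$, $o_i$, both vanishing on $\Gamma_0$, with $\bm w_i:=\bm u_i+\bm v_i\in\bm H^2(\Omega_i)$ and $p_i^*:=p_i+o_i\in H^1(\Omega_i)$. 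Since the element maps $M_i^l$ are analytic with uniformly bounded Jacobians and inverses, the Sobolev norms on $S$ and on $\Omega_i^l$ are equivalent up to fixed constants; combined with \eqref{eq:3.26} and \eqref{eq:3.29} this gives $\|\bm v_i\|_{2,\Omega_i}^2+\|o_i\|_{1,\Omega_i}^2\le C(\ln W)^2\,\mathcal V^W(\{\mathcal F_{\widetilde{\bm u},\widetilde p}\})$, which is the bound that will absorb every ``correction'' contribution below.

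Next, because $\bm v_i=\bm 0$ on $\Gamma_0$ we have $\llbracket\bm w\rrbracket=\llbracket\bm u\rrbracket$ on $\Gamma_0$, so $(\bm w,p^*)$ is an admissible pair for the interface problem with right-hand side $\bm f=\mathcal L(\bm w,p^*)$, divergence datum $\mathcal D\bm w$, interface jumps $\llbracket\bm u\rrbracket$ and $\bm g=\llbracket(\nu\nabla\bm w-p^*\bm I)\bm n\rrbracket$, and boundary datum $\bm w_2|_{\Gamma}$. By linearity, \eqref{eq:3.7} applied to $(\bm w,p^*)$ yields
\[
\|\bm w\|_{2,\Omega_1\cup\Omega_2}+\|p^*\|_{1,\Omega_1\cup\Omega_2}
\le C\Big(\|\mathcal L(\bm w,p^*)\|_{0,\Omega}+\|\mathcal D\bm w\|_{1,\Omega}+\|\llbracket\bm u\rrbracket\|_{3/2,\Gamma_0}
+\|\llbracket(\nu\nabla\bm w-p^*\bm I)\bm n\rrbracket\|_{1/2,\Gamma_0}+\|\bm w_2\|_{3/2,\Gamma}\Big).
\]

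The third step is to bound each term on the right by $\mathcal V^W$. Writing $\bm w=\bm u+\bm v$, $p^*=p+o$ and using linearity of $\mathcal L,\mathcal D$: the piece $\mathcal L(\bm u,p)$, transported to $S$ with the Jacobian scaling of \eqref{eq:3.13} and up to the spectrally small coefficient-projection error, equals $\sum_{i,l}\|(\mathcal L_i^l)^a(\widetilde{\bm u}_i^l,\tilde p_i^l)\|_{0,S}^2$ (a term of $\mathcal V^W$), while $\|\mathcal L(\bm v,o)\|_{0,\Omega}\lesssim\|\bm v\|_{2,\Omega_1\cup\Omega_2}+\|o\|_{1,\Omega_1\cup\Omega_2}$ is absorbed by the first step; $\mathcal D\bm w$ is handled the same way using the $\|(\mathcal D_i^l)^a\widetilde{\bm u}_i^l\|_{1,S}^2$ terms. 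The quantity $\|\llbracket\bm u\rrbracket\|_{3/2,\Gamma_0}^2$, after decomposing into edge contributions and identifying the local norm \eqref{eq:3.20}, is controlled by $\sum_{\gamma_s\subseteq\Gamma_0}\|\llbracket\bm u\rrbracket\|_{3/2,\gamma_s}^2$; the stress jump splits as $\llbracket(\nu\nabla\bm u-p\bm I)\bm n\rrbracket+\llbracket(\nu\nabla\bm v-o\bm I)\bm n\rrbracket$, whose first part matches the $\mathcal V^W$-term $\sum_{\gamma_s\subseteq\Gamma_0}\|\llbracket((\nu\nabla\bm u-p\bm I)\bm n)^a\rrbracket\|_{1/2,\gamma_s}^2$ (up to projection error) and whose second part is bounded, by a trace inequality, by $\|\bm v\|_{2,\Omega_1\cup\Omega_2}+\|o\|_{1,\Omega_1\cup\Omega_2}$; and $\|\bm w_2\|_{3/2,\Gamma}^2$ is controlled directly by Lemma~(\ref{lem:trace-boundary}). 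Collecting gives $\|\bm w\|_{2,\Omega_1\cup\Omega_2}^2+\|p^*\|_{1,\Omega_1\cup\Omega_2}^2\le C(\ln W)^2\,\mathcal V^W$. Finally, writing $\bm u=\bm w-\bm v$, $p=p^*-o$, the triangle inequality and the first step give $\|\bm u\|_{2,\Omega_1\cup\Omega_2}^2+\|p\|_{1,\Omega_1\cup\Omega_2}^2\le C(\ln W)^2\,\mathcal V^W$; restricting to each element and transporting back to $S$ by the norm equivalence yields $\mathcal Q^W=\sum_{i,l}(\|\widetilde{\bm u}_i^l\|_{2,S}^2+\|\tilde p_i^l\|_{1,S}^2)\le C(\ln W)^2\,\mathcal V^W$, which is the claim.

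The main obstacle is the careful treatment of the polynomial-approximation operators: the quantities $(\mathcal L_i^l)^a$, $(\mathcal D_i^l)^a$, $(\bm u_i)^a_x$, $(\nu\nabla\bm u-p\bm I)^a\bm n$ differ from their exact counterparts only by errors that must be shown to be spectrally (exponentially in $W$) small, so that they may be swallowed into the constant; this is exactly where the hypothesis ``$W$ large enough'' is used. A secondary technical point is the equivalence, uniform in the mesh, between the spectral-element fractional norms on the edges $\gamma_s$ (defined through $\|\cdot\|_{0,I}$ plus a tangential-derivative $\|\cdot\|_{1/2,I}$ term) and the genuine Sobolev trace norms on $\Gamma_0$ and $\Gamma$ appearing in \eqref{eq:3.7}. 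With the correction estimates of Lemmas~(\ref{lem:velocity-correction})–(\ref{lem:trace-boundary}) and these equivalences in hand, the rest is bookkeeping.
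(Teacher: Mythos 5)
Your proposal is correct and follows essentially the same route as the paper's proof: corrections from Lemmas~(\ref{lem:velocity-correction}) and (\ref{lem:pressure-correction}) vanishing on $\Gamma_0$, application of the regularity estimate \eqref{eq:3.7} to the corrected conforming pair $(\bm{w},p^*)$, a Minkowski/triangle splitting that absorbs the correction norms via \eqref{eq:3.26} and \eqref{eq:3.29}, and the boundary term handled by Lemma~(\ref{lem:trace-boundary}). You are merely more explicit than the paper about splitting the interface stress jump into its $(\bm{u},p)$ and $(\bm{v},o)$ parts and about the final triangle inequality recovering $\mathcal{Q}^W$, which fills in steps the paper leaves implicit.
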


\begin{proof}
By Lemma~(\ref{lem:velocity-correction}), there exists
\[
\left\{ \left\{ \tilde{\bm{v}}_{1}^{k}(\xi,\eta) \right\}_{k}, \left\{ \tilde{\bm{v}}_{2}^{l}(\xi,\eta) \right\}_{l} \right\}
\]
such that \( \tilde{\bm{v}}_{1}^{k} = \bm{0} \) on \( \Gamma_0 \) for all \( k = 1, 2, \dots, m_1 \), and \( \tilde{\bm{v}}_{2}^{l} = \bm{0} \) on \( \Gamma_0 \) for all \( l = 1, 2, \dots, m_2 \), and
\[
\bm{w}_i = \bm{u}_i + \bm{v}_i \in \bm{H}^{2}(\Omega_i), \quad \text{for } i = 1, 2.
\]
Moreover, \( \bm{w}_1 = \bm{u}_1 \) and \( \bm{w}_2 = \bm{u}_2 \) on the interface \( \Gamma_0 \).

Similarly, Lemma~(\ref{lem:pressure-correction}), there exists
\[
\left\{ \left\{ \tilde{o}_{1}^{k}(\xi,\eta) \right\}_{k}, \left\{ \tilde{o}_{2}^{l}(\xi,\eta) \right\}_{l} \right\}
\]
such that \( \tilde{o}_{1}^{k} = 0 \) on \( \Gamma_0 \) for all \( k = 1, 2, \dots, m_1 \), and \( \tilde{o}_{2}^{l} = 0 \) on \( \Gamma_0 \) for all \( l = 1, 2, \dots, m_2 \), and
\[
p_{1}^{*} = p_{1} + o_{1} \in H^{1}(\Omega_{1}), \quad
p_{2}^{*} = p_{2} + o_{2} \in H^{1}(\Omega_{2}).
\]

Hence, by the regularity result (Eq.~\eqref{eq:3.7}) stated in Section~(\ref{sec:interface-problem}), we have
\begin{align}
& \left\Vert \bm{w}_{1} \right\Vert_{2,\Omega_{1}}^{2}
+ \left\Vert \bm{w}_{2} \right\Vert_{2,\Omega_{2}}^{2}
+ \left\Vert p_{1}^{*} \right\Vert_{1,\Omega_{1}}^{2}
+ \left\Vert p_{2}^{*} \right\Vert_{1,\Omega_{2}}^{2} \nonumber \\
& \leq C \Big(
\left\Vert \mathcal{L}_{1}(\bm{w}_{1}, p_{1}^{*}) \right\Vert_{0,\Omega_{1}}^{2}
+ \left\Vert \mathcal{L}_{2}(\bm{w}_{2}, p_{2}^{*}) \right\Vert_{0,\Omega_{2}}^{2}
+ \left\Vert \mathcal{D}_{1}(\bm{w}_{1}) \right\Vert_{1,\Omega_{1}}^{2}
+ \left\Vert \mathcal{D}_{2}(\bm{w}_{2}) \right\Vert_{1,\Omega_{2}}^{2} \nonumber \\
& \quad + \left\Vert \bm{w}_{2} \right\Vert_{3/2,\Gamma}^{2}
+ \left\Vert \llbracket \bm{u} \rrbracket \right\Vert_{3/2,\Gamma_{0}}^{2}
+ \left\Vert \llbracket (\nu \nabla \bm{u} - p \bm{I}) \bm{n} \rrbracket \right\Vert_{1/2,\Gamma_{0}}^{2}
\Big).
\label{eq:3.33}
\end{align}
Using Minkowski's inequality, we can write the following estimate:
\begin{align}
& \left\Vert \mathcal{L}_{1}(\bm{w}_{1}, p_{1}^{*}) \right\Vert_{0, \Omega_{1}}^{2}
+ \left\Vert \mathcal{L}_{2}(\bm{w}_{2}, p_{2}^{*}) \right\Vert_{0, \Omega_{2}}^{2} \nonumber \\
& \leq C \left(
\sum_{k=1}^{m_{1}} \left\Vert (\mathcal{L}_{1}^{k})^a(\widetilde{\bm{u}}_{1}^{k}, \tilde{p}_{1}^{k})(\xi,\eta) \right\Vert_{0, S}^{2}
+ \sum_{l=1}^{m_{2}} \left\Vert (\mathcal{L}_{2}^{l})^a(\widetilde{\bm{u}}_{2}^{l}, \tilde{p}_{2}^{l})(\xi,\eta) \right\Vert_{0, S}^{2}
\right) \nonumber \\
& \quad + c \left(
\sum_{k=1}^{m_{1}} \left\Vert \tilde{\bm{v}}_{1}^{k}(\xi,\eta) \right\Vert_{2, S}^{2}
+ \sum_{l=1}^{m_{2}} \left\Vert \tilde{\bm{v}}_{2}^{l}(\xi,\eta) \right\Vert_{2, S}^{2}
+ \sum_{k=1}^{m_{1}} \left\Vert \tilde{o}_{1}^{k}(\xi,\eta) \right\Vert_{1, S}^{2}
+ \sum_{l=1}^{m_{2}} \left\Vert \tilde{o}_{2}^{l}(\xi,\eta) \right\Vert_{1, S}^{2}
\right).
\label{eq:3.34}
\end{align}
Similarly,
\begin{align}
& \left\Vert \mathcal{D}_{1}(\bm{w}_{1}) \right\Vert_{1,\Omega_{1}}^{2}
+ \left\Vert \mathcal{D}_{2}(\bm{w}_{2}) \right\Vert_{1,\Omega_{2}}^{2} \nonumber \\
& \leq C \left(
\sum_{k=1}^{m_{1}} \left\Vert (\mathcal{D}_{1}^{k})^a \widetilde{\bm{u}}_{1}^{k} \right\Vert_{1, S}^{2}
+ \sum_{l=1}^{m_{2}} \left\Vert (\mathcal{D}_{2}^{l})^a \widetilde{\bm{u}}_{2}^{l} \right\Vert_{1, S}^{2}
\right)
+ c \left(
\sum_{k=1}^{m_{1}} \left\Vert \tilde{\bm{v}}_{1}^{k}(\xi,\eta) \right\Vert_{2, S}^{2}
+ \sum_{l=1}^{m_{2}} \left\Vert \tilde{\bm{v}}_{2}^{l}(\xi,\eta) \right\Vert_{2, S}^{2}
\right).
\label{eq:3.35}
\end{align}
The rest of the proof follows from Lemmas~(\ref{lem:velocity-correction}), (\ref{lem:pressure-correction}), and (\ref{lem:trace-boundary}).
\end{proof}
\section{Numerical Formulation}
\label{sec:Numerical Formulation_ch4}

Here we define the least-squares minimizing functional based on the stability
estimate derived in the previous section. 

Define \( \bm{f}_{1} = \bm{f}\mid_{\Omega_{1}} \) and \( \bm{f}_{2} = \bm{f}\mid_{\Omega_{2}} \).
Let \( J_{1}^{l}(\xi,\eta) \) be the Jacobian of the mapping \( M_{1}^{l}(\xi,\eta) \)
from \( S = (-1,1)^{2} \) to \( \Omega_{1}^{l} \), for \( l = 1,2,\dots,m_{1} \).
Let \( \bm{\widetilde{f}}_{1}^{l}(\xi,\eta) = \bm{f}_{1}(M_{1}^{l}(\xi,\eta)) \) and define
\[
\bm{F}_{1}^{l}(\xi,\eta) = \bm{\widetilde{f}}_{1}^{l}(\xi,\eta)\sqrt{J_{1}^{l}(\xi,\eta)},
\]
for \( l = 1,2,\dots,m_{1} \). Similarly, one can define \( \bm{F}_{2}^{k}(\xi,\eta) \) 
for \( k = 1,2,\dots,m_{2} \).

Let us suppose that on the interface \( \Gamma_{0} \), we have \( \left\llbracket \bm{u} \right\rrbracket = \bm{0} \) and \( \left\llbracket (\nu \nabla \bm{u} - p \bm{I}) \bm{n} \right\rrbracket = \bm{g} \). Let \( \gamma_{s} \subseteq \Gamma_{0} \) be the image of \( \xi = 1 \) under the mapping \( M_{1}^{n} \), which maps \( S \) to \( \Omega_{1}^{n} \), and also the image of \( \xi = -1 \) under the mapping \( M_{2}^{m} \), which maps \( S \) to \( \Omega_{2}^{m} \). These mappings are defined in Section~(\ref{sec:stability-estimate}).\\
Let
\(
\bm{l}^{m,n}(\eta) := \bm{g}^{m}(-1, \eta) = \bm{g}^{n}(1, \eta),  \text{ for } \eta \in [-1, 1].
\)\\
Further, as defined earlier, \(\bm{u}_1 = \bm{u}|_{\Omega_1}\) and \(\bm{u}_2 = \bm{u}|_{\Omega_2}\).  
Let \(\widehat{\bm{F}}_i^k\) denote the unique polynomial approximation of degree \(2W\) in the variables \(\xi\) and \(\eta\), obtained by orthogonal projection with respect to the usual inner product in \(\bm{H}^2(S)\).\\
We define \(\widehat{\bm{l}}^{m,n}\) to be the polynomial approximation of degree \(2W\), which is the orthogonal projection of \(\bm{l}^{m,n}\) with respect to the usual inner product in \(\bm{H}^2(-1,1)\).\\
Let \(\{\mathcal F_{\widetilde{\bm{u}},\widetilde p}\}=\left\{\left\{ \bm{\widetilde{u}}_{1}^{k}(\xi,\eta) \right\}_{k},
  \left\{ \widetilde{p}_{1}^{k}(\xi,\eta) \right\}_{k},
  \left\{ \bm{\widetilde{u}}_{2}^{l}(\xi,\eta) \right\}_{l},
  \left\{ \widetilde{p}_{2}^{l}(\xi,\eta) \right\}_{l}\right\} \in \Pi^W\). Then we
define the functional
\begin{align}
\mathcal R^W(\{\mathcal F_{\widetilde{\bm{u}},\widetilde{p}}\}) &=\,\,
\sum_{k=1}^{m_{1}} \left\| (\mathcal{L}_{1}^{k})^a(\bm{\widetilde{u}}_{1}^{k}, \widetilde{p}_{1}^{k}) - \widehat{\bm{F}}_{1}^{k} \right\|_{0,S}^{2}
+ \sum_{l=1}^{m_{2}} \left\| (\mathcal{L}_{2}^{l})^a(\bm{\widetilde{u}}_{2}^{l}, \widetilde{p}_{2}^{l}) - \widehat{\bm{F}}_{2}^{l} \right\|_{0,S}^{2} \nonumber \\
& + \sum_{k=1}^{m_{1}} \left\| (\mathcal{D}_{1}^{k})^a\bm{\widetilde{u}}_{1}^{k} \right\|_{1,S}^{2}
+ \sum_{l=1}^{m_{2}} \left\| (\mathcal{D}_{2}^{l})^a \bm{\widetilde{u}}_{2}^{l} \right\|_{1,S}^{2} \nonumber \\
& + \sum_{i=1}^{2} \sum_{\gamma_{s} \in \mathcal F_i^{\text{int}}} \bigg(
      \left\| \llbracket \bm{u}_{i} \rrbracket \right\|_{0,\gamma_{s}}^{2}
    + \left\| \llbracket (\bm{u}_{i})_{x}^a \rrbracket \right\|_{\frac{1}{2},\gamma_{s}}^{2}
    + \left\| \llbracket (\bm{u}_{i})_{y}^a \rrbracket \right\|_{\frac{1}{2},\gamma_{s}}^{2}
    + \left\| \llbracket p_{i} \rrbracket \right\|_{\frac{1}{2},\gamma_{s}}^{2}
    \bigg) \nonumber \\
& + \sum_{\gamma_{s} \subseteq \Gamma_{0}} \bigg(
      \left\| \llbracket \bm{u} \rrbracket  \right\|_{\frac{3}{2},\gamma_{s}}^{2}
    + \left\| (\llbracket (\nu \nabla \bm{u} - p \bm{I}) \bm{n})^a \rrbracket - \widehat{\bm{l}}^{m,n} \right\|_{\frac{1}{2},\gamma_{s}}^{2}
    \bigg) \nonumber \\
&\quad + \sum_{\Gamma_l \subset \Gamma}
\bigg(
\left\| \bm{u}_2  \right\|_{0, \Gamma_l}^2
+ \left\| \left( \frac{\partial \bm{u}_2}{\partial \bm{T}} \right)^a  \right\|_{\frac{1}{2}, \Gamma_l}^2
\bigg).
\label{eq:4.1}
\end{align}

The approximate solution is chosen as the unique
\[\{\mathcal F_{\widetilde{\bm{z}},\widetilde q}\}=
\left\{
\left\{ \bm{\widetilde{z}}_{1}^{k}(\xi,\eta) \right\}_{k},
\left\{ \widetilde{q}_{1}^{k}(\xi,\eta) \right\}_{k},
\left\{ \bm{\widetilde{z}}_{2}^{l}(\xi,\eta) \right\}_{l},
\left\{ \widetilde{q}_{2}^{l}(\xi,\eta) \right\}_{l}
\right\} \in \Pi^{W}
\]
which minimizes the functional
\(
\mathcal R^{W} \left(\{\mathcal F_{\widetilde{\bm{u}},\widetilde p}\}\right)
\) over all such \(\mathcal F_{\widetilde{\bm{u}},\widetilde p} \in \Pi^W\).

The minimization problem leads to a system of equations of the form
\begin{equation}
A V = h. \label{eq:4.2}
\end{equation}
Here, \( A \) is a symmetric, positive-definite matrix, and the vector
\( V \) consists of the values of the spectral element functions at the Gauss–Legendre–Lobatto (GLL) points.
This system is solved using the Preconditioned Conjugate Gradient Method (PCGM).
Since each iteration of the PCGM requires evaluating the matrix-vector product,
this action is computed accurately and efficiently without explicitly storing the matrix \( A \).

Define the quadratic form \(\mathcal U^W(\{\mathcal F_{\widetilde{\bm{u}},\widetilde p}\})\) as:
\begin{equation}
\mathcal{U}^{W}(\{\mathcal F_{\widetilde{\bm{u}},\widetilde p}\}) =
\sum_{k=1}^{m_{1}} \left\| \bm{\widetilde{u}}_{1}^{k}(\xi,\eta) \right\|_{2,S}^{2}
+ \sum_{k=1}^{m_{1}} \left\| \widetilde{p}_{1}^{k}(\xi,\eta) \right\|_{1,S}^{2}
+ \sum_{l=1}^{m_{2}} \left\| \bm{\widetilde{u}}_{2}^{l}(\xi,\eta) \right\|_{2,S}^{2}
+ \sum_{l=1}^{m_{2}} \left\| \widetilde{p}_{2}^{l}(\xi,\eta) \right\|_{1,S}^{2}.
\label{eq:4.3}
\end{equation}

 Using trace theorem for Sobolev spaces, the following result holds:
\begin{equation}
  \mathcal V^W(\{\mathcal F_{\widetilde{\bm{u}},\widetilde p}\}) \le K\,\mathcal{U}^{W}(\{\mathcal F_{\widetilde{\bm{u}},\widetilde p}\}).
 \label{eq:4.4}
\end{equation}
where, K is a constant. By Theorem~(\ref{thm:stability}), we get
\begin{equation}
  \frac{1}{C(ln W)^2}\mathcal U^W(\{\mathcal F_{\widetilde{\bm{u}},\widetilde p}\}) \le \,\mathcal{V}^{W}(\{\mathcal F_{\widetilde{\bm{u}},\widetilde p}\}).
 \label{eq:4.5}
\end{equation}
Hence, using \eqref{eq:4.4} and \eqref{eq:4.5} it follows that there exists  constant \(C\) and \(K\) such that
\begin{equation}
  \frac{1}{K}\mathcal V^W(\{\mathcal F_{\widetilde{\bm{u}},\widetilde p}\}) \le \mathcal U^W\{(\mathcal F_{\widetilde{\bm{u}},\widetilde p}\})\,\le C(ln W)^2\mathcal{V}^{W}(\{\mathcal F_{\widetilde{\bm{u}},\widetilde p}\}).
 \label{eq:4.6}
\end{equation}
Thus, the two quadratic forms \(\mathcal U^W(\{\mathcal F_{\widetilde{\bm{u}},\widetilde p}\})\) and \(\mathcal V^W(\{\mathcal F_{\widetilde{\bm{u}},\widetilde p}\})\) are spectrally equivalent. We can use the quadratic form \(\mathcal U^W(\{\mathcal F_{\widetilde{\bm{u}},\widetilde p}\})\) as a preconditioner, which consists of a decoupled set of quadratic forms on each element and gives a block-diagonal matrix, where each diagonal block corresponds to a particular element. It is clear that in each element, the preconditioner corresponds to the quadratic form:

\begin{equation}\label{eq:4.7}
B(\widetilde{\bm{u}},\widetilde{p}) \;=\; \|\widetilde{\bm{u}}\|_{\bm{H}^2(S)}^2 \;+\; \|\widetilde{p}\|_{H^1(S)}^2.
\end{equation}
where, \(\widetilde{\bm{u}}=\widetilde{\bm{u}}(\xi,\eta) \text{ and } \widetilde{p}=\widetilde{p}(\xi,\eta)\) is a polynomial of degree \(W\) in \(\xi,\eta\) separately.\\

As, mentioned in above, PCGM requires the matrix vector product, this action is computed without storing the matrix. 
\section{Error Estimate}
\label{sec:ErrorEstimates_ch4}
To prove the error estimate, we assume that the solution of the interface problem \eqref{eq:3.1}--\eqref{eq:3.2}, is sufficiently smooth in the individual subdomains \( \Omega_{1} \) and \( \Omega_{2} \). Specifically, we assume that \( \bm{u}_{1} \in \bm{H}^{k}(\Omega_{1}) \) and \( \bm{u}_{2} \in \bm{H}^{k}(\Omega_{2}) \) for \( k \geq 2 \). As defined in Section~(\ref{sec:discretization}), \( M_{i}^{l} \) is an analytic map from \( S = (-1,1)^{2} \) to \( \Omega_{i}^{l} \).

Let \(\bm{\lambda}=(\xi,\eta)\) denote
\[
\bm{U}_{i}^{k}(\bm{\lambda}) =
\begin{pmatrix}
U_{1}^{i,k}(\bm{\lambda}) \\
U_{2}^{i,k}(\bm{\lambda})
\end{pmatrix}
= \bm{u}_i(M_{i}^{k}(\bm{\lambda})), \qquad
P_{i}^{k}(\bm{\lambda}) = p_i(M_{i}^{k}(\bm{\lambda})),
\]
for \( \bm{\lambda} \in S \), where \(  1 \le k \le m_i  \) and \( i=1,2 \). Then
\( U_{1}^{i,k}, U_{2}^{i,k} \), and \( P_{i}^{k} \)
are analytic on \( \bar{S} \), and for some constants \( C \) and \( d \), we
have from \cite{babuska}
\begin{align}
|D^{\alpha} U_{j}^{i,k}(\xi,\eta)| &\leq C\, m! d^{m}, \label{eq:4.22} \\
|D^{\alpha} P_{i}^{k}(\xi,\eta)| &\leq C\, m! d^{m}, \label{eq:4.23}
\end{align}
for \( i,j = 1,2 \) and \( |\alpha| = m,\, m = 1,2,\dots \).
\medskip

\begin{theorem}
Let \( \mathcal F_{\widetilde{\bm{z}},\widetilde q} \in \Pi^W \) be the minimizer of the functional \( \mathcal{R}^W(\mathcal F_{\widetilde{\bm{u}},\widetilde p}) \) over all \( \mathcal F_{\widetilde{\bm{u}},\widetilde p} \in \Pi^W \). Then for \( W \) sufficiently large, there exist constants \( C > 0 \), \( b > 0 \), independent of \( W \), such that
\begin{equation*} 
\sum_{i=1}^{2} \sum_{l=1}^{m_i}\left\| \bm{U}_i^l(\bm{\lambda}) - \widetilde{\bm{z}}_i^l(\bm{\lambda}) \right\|_{2,S}^2
+ 
\sum_{i=1}^{2} \sum_{l=1}^{m_i}\left\| P_i^l(\bm{\lambda}) - \widetilde{q}_i^l(\bm{\lambda}) \right\|_{1,S}^2 \\[0.5em] \leq C e^{-b W}.
\end{equation*}

\end{theorem}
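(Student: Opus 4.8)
The plan is to run the classical three-step argument for least-squares spectral element methods. First, using that the pulled-back components $U_j^{i,k}$ and $P_i^k$ are analytic on $\bar S$ with the derivative bounds \eqref{eq:4.22}--\eqref{eq:4.23}, I would invoke standard tensor-product spectral approximation theory on the square (truncated Legendre expansions, or Gauss--Lobatto--Legendre interpolation; cf.\ \cite{babuska,schwab}) to produce polynomials $\widehat{\bm U}_i^l,\widehat P_i^l$ of separate degree $W$ satisfying
\[
\sum_{i=1}^{2}\sum_{l=1}^{m_i}\bigl\|\bm U_i^l-\widehat{\bm U}_i^l\bigr\|_{2,S}^2+\sum_{i=1}^{2}\sum_{l=1}^{m_i}\bigl\|P_i^l-\widehat P_i^l\bigr\|_{1,S}^2\le C\,e^{-b_0 W},
\]
for some $b_0>0$ independent of $W$; write $\mathcal F_{\widehat{\bm u},\widehat p}\in\Pi^W$ for the associated spectral element function. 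This $\mathcal F_{\widehat{\bm u},\widehat p}$ serves as the comparison element.

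Next I would show $\mathcal{R}^W(\mathcal F_{\widehat{\bm u},\widehat p})\le C\,e^{-b_1 W}$ by estimating each group of terms in \eqref{eq:4.1}. For the momentum and continuity residuals, a triangle inequality splits $(\mathcal{L}_i^l)^a(\widehat{\bm U}_i^l,\widehat P_i^l)-\widehat{\bm F}_i^l$ (resp.\ $(\mathcal{D}_i^l)^a\widehat{\bm U}_i^l$) into: the gap between the true operators and their polynomial surrogates, which is spectrally small by the Remark after \eqref{eq:3.13}; the operators applied to the approximation errors $\bm U_i^l-\widehat{\bm U}_i^l$, $P_i^l-\widehat P_i^l$, controlled by the first-step bound since $\mathcal{L}_i^l,\mathcal{D}_i^l$ have analytic coefficients; and the exact residuals $\mathcal{L}_i^l(\bm U_i^l,P_i^l)-\bm F_i^l=\bm 0$, $\mathcal{D}_i^l\bm U_i^l=\bm 0$, together with the analytic-data error $\bm F_i^l-\widehat{\bm F}_i^l$. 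The inter-element jumps vanish for the continuous exact solution, so for $\mathcal F_{\widehat{\bm u},\widehat p}$ each jump is a difference of two one-sided polynomial traces approximating the same continuous data, hence exponentially small after invoking the trace inequalities from $\bm H^2(S),H^1(S)$ into the edge norms $L^2,H^{1/2}$. The interface terms use $\llbracket\bm u\rrbracket=\bm 0$ and $\llbracket(\nu\nabla\bm u-p\bm I)\bm n\rrbracket=\bm g$ on $\Gamma_0$ in the same manner, with the $H^{3/2}$ and $H^{1/2}$ edge norms handled by trace theorems and the analyticity of $\bm g$ giving $\|\bm l^{m,n}-\widehat{\bm l}^{m,n}\|_{1/2,I}\le C\,e^{-bW}$; the boundary terms use $\bm u=\bm 0$ on $\Gamma$.

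For the third step, minimality of $\mathcal F_{\widetilde{\bm z},\widetilde q}$ gives $\mathcal{R}^W(\mathcal F_{\widetilde{\bm z},\widetilde q})\le\mathcal{R}^W(\mathcal F_{\widehat{\bm u},\widehat p})\le C\,e^{-b_1 W}$. Put $\mathcal F_{\widetilde{\bm e}}:=\mathcal F_{\widetilde{\bm z},\widetilde q}-\mathcal F_{\widehat{\bm u},\widehat p}\in\Pi^W$. Since the operators $\mathcal{L}_i^l,\mathcal{D}_i^l$ and all trace/jump maps are linear, and $\mathcal{V}^W$ in \eqref{eq:3.25} differs from $\mathcal{R}^W$ only through the presence of the data $\widehat{\bm F}_i^l,\widehat{\bm l}^{m,n}$ in the corresponding terms, Minkowski's inequality yields $\mathcal{V}^W(\mathcal F_{\widetilde{\bm e}})\le C\bigl(\mathcal{R}^W(\mathcal F_{\widetilde{\bm z},\widetilde q})+\mathcal{R}^W(\mathcal F_{\widehat{\bm u},\widehat p})\bigr)\le C\,e^{-b_1 W}$. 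Applying the stability estimate Theorem~(\ref{thm:stability}) and noting that $\mathcal{Q}^W$ and $\mathcal{U}^W$ are the same quadratic form (compare \eqref{eq:quadratic} with \eqref{eq:4.3}),
\[
\sum_{i=1}^{2}\sum_{l=1}^{m_i}\bigl\|\widehat{\bm U}_i^l-\widetilde{\bm z}_i^l\bigr\|_{2,S}^2+\sum_{i=1}^{2}\sum_{l=1}^{m_i}\bigl\|\widehat P_i^l-\widetilde q_i^l\bigr\|_{1,S}^2\le C(\ln W)^2\,\mathcal{V}^W(\mathcal F_{\widetilde{\bm e}})\le C(\ln W)^2 e^{-b_1 W}.
\]
A final triangle inequality in $\bm H^2(S)$ and $H^1(S)$ against the first-step bound, absorbing the $(\ln W)^2$ factor into the exponential by choosing any $b<b_1$, produces the asserted estimate.

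I expect the main obstacle to be the second step: one must verify that \emph{every} residual in $\mathcal{R}^W$ --- in particular those measured in the fractional norms $\|\cdot\|_{1/2,\gamma_s}$ and $\|\cdot\|_{3/2,\gamma_s}$, and those involving the polynomial surrogates $(\mathcal{L}_i^l)^a$, $(\mathcal{D}_i^l)^a$, the approximated metric coefficients $\hat\xi_x,\hat\xi_y,\hat\eta_x,\hat\eta_y$, and the projected data $\widehat{\bm F}_i^l,\widehat{\bm l}^{m,n}$ --- is dominated by the exponentially small $\bm H^2/H^1$ approximation error of the analytic exact solution. This forces careful use of the trace inequalities between $\bm H^2(S)$, $H^1(S)$ and the one-dimensional edge norms, of the spectral accuracy of each auxiliary projection (the Remark after \eqref{eq:3.13} and the approximation estimates of \cite{schwab}), and of the fact that nonconformity does not spoil the jump bounds because the two one-sided polynomial traces approximate the same continuous quantity.
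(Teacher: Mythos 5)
Your proposal is correct and follows essentially the same route as the paper: construct a degree-$W$ comparison element from the analytic pulled-back solution, bound $\mathcal{R}^W$ at it term by term (residuals, inter-element jumps, interface and boundary terms) using the spectral smallness of the surrogate operators and projected data, then use minimality together with the stability estimate of Theorem~(\ref{thm:stability}) and a final triangle inequality, absorbing the $(\ln W)^2$ factor into the exponential. The only cosmetic differences are that the paper passes from minimality to the bound on $\mathcal{V}^W$ of the difference via the exact quadratic-form identity $\mathcal{R}^W(\{\mathcal{F}_{\bm{\Phi},\psi}\})=\mathcal{R}^W(\{\mathcal{F}_{\widetilde{\bm{z}},\widetilde q}\})+\mathcal{V}^W(\mathcal F_{(\bm{\Phi}-\widetilde{\bm{z}}),(\psi-\widetilde q)})$ rather than your Minkowski argument, and that it keeps the approximation bounds algebraic in $s$ and $W$ until the end, converting to $Ce^{-bW}$ by choosing $s$ and using Stirling's formula.
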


\begin{proof}
From the approximation results in \cite{schwab}, there exists a polynomial \( \Phi(\bm{\lambda}) \) of degree \( W \) in each variable separately such that
\begin{equation}
\|U(\bm{\lambda}) - \Phi(\bm{\lambda})\|_{n,S}^2 \leq C_s W^{4 + 2n - 2s} \|U\|_{s,S}^2
\label{eq:4.24}
\end{equation}
for \( 0 \leq n \leq 2 \) and all \( W > s \), where \( C_s = C_1 e^{2s} \).

Hence, there exist polynomials
\(
\bm{\Phi}_i^l(\bm{\lambda}) = \bigl(\Phi_1^{i,l}(\bm{\lambda}), \Phi_2^{i,l}(\bm{\lambda})\bigr)^T
\) and 
\(
\psi_i^l(\bm{\lambda}), i=1,2,
\) such that,

\begin{subequations} \label{eq:4.25}
\begin{align}
\|\bm{U}_i^l(\bm{\lambda}) - \bm{\Phi}_i^l(\bm{\lambda})\|_{2,S}^2 
&\leq C_s W^{-2s + 8} \|\bm{U}_i^l\|_{s,S}^2 \notag \\
&\leq C_s W^{-2s + 8} 
\left( C d^s s! \right)^2
\quad \text{for all } 1 \le l \le m_i, \; i = 1,2.
\label{eq:4.25a} \\[1em]
\|P_i^l(\bm{\lambda}) - \psi_i^l(\bm{\lambda})\|_{1,S}^2 
&\leq C_s W^{-2s + 6} \|\psi_i^l(\bm{\lambda})\|_{s,S}^2 \notag \\
&\leq C_s W^{-2s + 6} 
\left( C d^s s! \right)^2
\quad \text{for all } 1 \le l \le m_i, \; i = 1,2.
\label{eq:4.25b}
\end{align}
\end{subequations}

Define \(\bm{F}_i^l(\bm{\lambda}) = \widetilde{\bm{f}}_i^l(\bm{\lambda})\sqrt{J_i^l}\), and let \(\hat{\bm{F}}_i^l(\bm{\lambda})\) be the polynomial of degree \(2W \) in each variable separately which is the orthogonal projection of \(\bm{F}_i^l\) in \(H^2(S)\) into the space of polynomials of degree \(2W\). Then
\begin{equation}
\|\bm{F}_i^l(\bm{\lambda}) - \hat{\bm{F}}_i^l(\bm{\lambda})\|_{0,S}^2 \leq C_s W^{-2s + 8} \left( C d^s s! \right)^2.
\label{eq:4.26}
\end{equation}

Consider the set of functions \(\{\mathcal F_{\bm{\Phi},\psi}\} = \{\{\bm{\Phi}_1^l(\bm{\lambda})\}_l, \
\{\psi_1^l(\bm{\lambda})\}_l,\{\bm{\Phi}_2^l(\bm{\lambda})\}_l,\
\{\psi_2^l(\bm{\lambda})\}_l\}\). \\
Now we estimate:
\begin{align}
\mathcal{R}^W(\{\mathcal{F}_{\bm{\Phi},\psi}\}) &= 
\sum_{k=1}^{m_1}
\left\| (\mathcal{L}_1^k)^a(\bm{\Phi}_1^k, \psi_1^k) - \hat{\bm{F}}_1^k \right\|_{0, S}^2
+ \sum_{l=1}^{m_2}
\left\| (\mathcal{L}_2^l)^a(\bm{\Phi}_2^l, \psi_2^l)  - \hat{\bm{F}}_2^l\right\|_{0, S}^2 \notag \\[0.5em]
&\quad + \sum_{k=1}^{m_1}
\left\| (\mathcal{D}_1^k)^a \bm{\Phi}_1^k \right\|_{1,S}^2
+ \sum_{l=1}^{m_2}
\left\| (\mathcal{D}_2^l)^a \bm{\Phi}_2^l \right\|_{1,S}^2 \notag \\[0.5em]
&\quad + \sum_{i=1}^2 \sum_{\gamma_s \in \mathcal{F}_i^{\text{int}}}
\bigg(
\left\| \llbracket \bm{\Phi}_i \rrbracket \right\|_{0, \gamma_s}^2
+ \sum_{j=x,y} \left\| \llbracket (\bm{\Phi}_i)^a_{j} \rrbracket \right\|_{\frac{1}{2}\gamma_s}^2
+ \left\| \llbracket \psi_i \rrbracket \right\|_{\frac{1}{2}, \gamma_s}^2
\bigg) \notag \\[0.5em]
&\quad + \sum_{\gamma_s \subset \Gamma_0}
\bigg(
\left\| \llbracket \bm{\Phi}   \rrbracket \right\|_{\frac{3}{2}, \gamma_s}^2
+ \left\| \llbracket (\nu \nabla \bm{\Phi} - \psi \mathbf{I})^a \bm{n} \rrbracket - \hat{\bm{l}}^{m,n} \right\|_{\frac{1}{2}, \gamma_s}^2
\bigg) \notag \\[0.5em]
&\quad + \sum_{\gamma_s \subset \Gamma}
\bigg(
\left\| \bm{\Phi}_2  \right\|_{0, \gamma_s}^2
+ \left\| \left( \frac{\partial \bm{\Phi}_2}{\partial \bm{T}} \right)^a  \right\|_{\frac{1}{2}, \gamma_s}^2
\bigg).
\label{eq:4.27}
\end{align}


As defined earlier, \((\mathcal L_i^l)^a\) is given by,
\begin{equation} \label{eq:4.28}
(\mathcal L_i^l)^a \, \tilde{u} = 
(A_i^l)^a \, \tilde{u}_{\xi \xi} 
+ (B_i^l)^a \, \tilde{u}_{\xi \eta} 
+ (C_i^l)^a \, \tilde{u}_{\eta \eta} 
+ (D_i^l)^a \, \tilde{u}_{\xi} 
+ (E_i^l)^a \, \tilde{u}_{\eta}.
\end{equation}

Here, \((A_i^l)^a\) is the orthogonal projection of \(A_i^l\) in \(H^2(S)\) into the space of polynomials of degree \(W\). The other coefficients of \((\mathcal L_i^l)^a\) are obtained in a similar way. Therefore,
\[
|A_i^l - (A_i^l)^a|^2 \leq C_s \, W^{-2s+8} \, (C d^{s} s!)^2.
\]

Same relation holds for all other coefficients too. Now
\begin{align}\label{eq:4.29}
\left\| (\mathcal{L}_i^l)^a(\bm{\Phi}_i^l, \psi_i^l) - \hat{\bm{F}}_i^l \right\|_{0, S}^2 
&\leq 3 \Big( 
\left\| (\mathcal{L}_i^l)(\bm{U}_i^l, P_i^l)  - (\mathcal{L}_i^l)^a(\bm{U}_i^l, P_i^l) \right\|_{0, S}^2 
\notag \\
&\quad + \left\| (\mathcal{L}_i^l)^a(\bm{U}_i^l, P_i^l)  - (\mathcal{L}_i^l)^a(\bm{\Phi}_i^l, \psi_i^l) \right\|_{0, S}^2 
\notag \\
&\quad + \left\| \bm{F}_i^l - \hat{\bm{F}}_i^l \right\|_{0, S}^2 
\Big)
\notag \\
&\leq K_1 \Big( C_s W^{-2s+8} (C d^s s!)^2 + C_s W^{-2s+6} (C d^s s!)^2 \Big).
\end{align}
Also,
\begin{equation} \label{eq:4.30}
\left\| (\mathcal{D}_i^l)^a \bm{\Phi}_i^l\right\|_{1, S}^2 
\leq K_2C_s W^{-2s + 8} (C d^s s!)^2.
\end{equation}
\noindent
Now, we estimate the following jump terms at inter-element boundary
\begin{equation*} 
\left\| \llbracket \bm{\Phi}_i \rrbracket \right\|^2_{0, \gamma_s}
+  \left\| \llbracket (\bm{\Phi}_i)^a_{x} \rrbracket \right\|^2_{1/2, \gamma_s}
+  \left\| \llbracket (\bm{\Phi}_i)^a_{y} \rrbracket \right\|^2_{1/2, \gamma_s}
\end{equation*}

for any \( \gamma_s \subseteq \Omega_i \), \( i = 1,2 \).

\noindent
The other boundary terms can be handled in a similar fashion. Clearly, \( \gamma_s \) is a side which is common to \( \Omega_i^m \) and \( \Omega_i^n \) for some \( m \) and \( n \). Let us assume that \( \gamma_s \) is the image of the side \( \eta = -1 \) under the mapping \( M^m_i \), which maps \( S \) to \( \Omega_i^m \), and \( \eta = 1 \) under the mapping \( M^n_i \), which maps \( S \) to \( \Omega_i^n \). Then,
\begin{align}
\left\| \llbracket \bm{\Phi_i} \rrbracket \right\|^2_{0, \gamma_s}
&= \int_{-1}^1 
\left( \bm{\Phi}^n_i (\xi,1) - \bm{\Phi}^m_i (\xi,-1) \right)^2 \,d\xi \notag
\\
&\leq K_3 \left( W^{-2s + 8} \, (C_s d^s s!)^2 \right). \label{eq:4.31}
\end{align}

Let \( \xi = \lambda_1 \), \( \eta = \lambda_2 \), \( x = x_1 \), \( y = x_2 \). Then,
\begin{align*}
\left\| \llbracket (\bm{\Phi_i})^a_{x_j} \rrbracket \right\|^2_{1/2, \gamma_s} 
&= \left\| 
\sum_{k=1}^2 (\bm{\Phi_i}^n)_{\lambda_k} ( \widehat{\lambda}_k )^n_{x_j} (\xi,1)
- \sum_{k=1}^2 (\bm{\Phi_i}^m)_{\lambda_k} ( \widehat{\lambda}_k )^m_{x_j} (\xi,-1)
\right\|^2_{1/2, (-1,1)}.
\end{align*}

\medskip

The term \( \widehat{\lambda}_i \) denotes the polynomial approximation of \( \lambda_i \) in \( \bm{\lambda} \)-coordinates (i.e., \( (\xi, \eta) \)) of degree \( W \) in \( \xi \) and \( \eta \) separately, as defined in Theorem~(4.46) of~\cite{schwab}. 


\noindent Moreover, we have
\[
(\bm{U}_i^n)_{x_j}(\xi,1) = (\bm{U}_i^m)_{x_j}( \xi,-1),
\]
and
\[
\| ab \|_{1/2, (-1,1)} \leq \| a \|_{1, \infty, (-1,1)} \| b \|_{1/2, (-1,1)}.
\]
Now \( (\lambda_k)^m_{x_j} \) are analytic functions of \( (\xi, \eta) \), and satisfy
\[
| D^{\alpha_1} D^{\alpha_2}  ((\lambda_k)^m_{x_j}) | \leq C d^s s!
\]
for \( (\xi, \eta) \in S \), \( k = 1,2 \), and \( |\alpha| \leq s \). Thus, we can show
\[
\left\| (\lambda_k)^m_{x_j} - (\widehat{\lambda}_k)^m_{x_j} \right\|_{1/2, S}^2
\leq C_s \left( W^{-2s+8} (C d^s s!)^2 \right),
\]
and
\[
\left| (\widehat{\lambda}_k)^m_{x_j} \right|^2_{1, \infty, (-1,1)} \leq C (\ln W).
\]
Using Sobolev's embedding theorem (Theorem 4.76 in~\cite{schwab} and~\cite{tomar}), and combining all these estimates, we conclude that
\begin{equation} \label{eq:4.32}
\left\| \llbracket (\bm{\Phi_i})^a_{x_j} \rrbracket \right\|^2_{1/2, \gamma_s} 
\leq K_4 \left( C_sW^{-2s+8} (\ln W)(C d^s s!)^2 \right)
\quad \forall j, \; 1 \le j \le 2.
\end{equation}
\noindent


In the same manner, the following estimate also holds:
\begin{align} \label{eq:4.33}
\left\| \llbracket \bm{\Phi} \rrbracket \right\|^2_{0,3/2} 
&+ \left\| \llbracket (\nu \nabla \bm{\Phi} - \psi \mathbf{I})^a \bm{n} \rrbracket 
- \hat{\bm{l}}^{m,n} \right\|^2_{\frac{3}{2}, \gamma_s} \notag \\
&\leq K_5  \Bigg( C_s W^{-2s+8} (\ln W) \ (C d^s s!)^2 
+ C_s W^{-2s+6} \ (C d^s s!)^2 \Bigg).
\end{align}

 In a similar way terms on the boundary can be estimated. 
Adding all the above inequalities from \eqref{eq:4.29}-\eqref{eq:4.33}, we get:
\[
\mathcal{R}^W ( \{ \mathcal{F}_{\bm{\Phi}, \psi}\} ) \leq K_6  \Bigg( C_s W^{-2s+8} (\ln W) \ (C d^s s!)^2 
+ C_s W^{-2s+6} \ (C d^s s!)^2 \Bigg).
\]
Since \( \mathcal F_{\widetilde{\bm{z}},\widetilde q} \) minimizes \( \mathcal{R}^W ( \mathcal F_{\widetilde{\bm{u}},\widetilde p}) \) over all \( \mathcal F_{\widetilde{\bm{u}},\widetilde p} \), we have
\[
\mathcal{R}^W ( \{ \mathcal{F}_{\bm{\Phi},\psi}\} ) 
= \mathcal{R}^W ( \{ \mathcal{F}_{\widetilde{\bm{z}}, \widetilde{q}} \} )
+ \mathcal{V}^W ( \mathcal F_{( \bm{\Phi} - \widetilde{\bm{z}}), (\psi-\widetilde{q}) } ).
\]
Therefore, we conclude that
\[
\mathcal{V}^W ( \{\mathcal F_{ (\bm{\Phi} - \widetilde{\bm{z}}), (\psi-\widetilde{q})} \} ) 
\leq \mathcal{R}^W ( \{ \mathcal{F}_{\bm{\Phi}, \psi} \} ).
\]
Using the stability estimate Theorem~(\ref{thm:stability}), we obtain
\begin{equation}
\begin{aligned}
\sum_{i=1}^2 \sum_{l=1}^{m_i} & \left\| \bm{\Phi}_i^l(\bm{\lambda}) - \widetilde{\bm{z}}_i^l(\bm{\lambda}) \right\|_{2,S}^2 
 + \sum_{i=1}^2 \sum_{l=1}^{m_i} \left\| \psi_i^l(\bm{\lambda}) - \widetilde{q}_i^l(\bm{\lambda}) \right\|_{1,S}^2 \\
& \leq K_7 \Bigl(C_s W^{-2s+8} (\ln W) (C d^s s!)^2 + C_sW^{-2s+6} (C d^s s!)^2 \Bigr).
\end{aligned}
\label{eq:4.34}
\end{equation}

It is easy to show that
\begin{equation}
\begin{aligned}
& \sum_{i=1}^2 \sum_{l=1}^{m_i} 
  \left\| \bm{U}_i^l(\bm{\lambda}) - \bm{\Phi}_i^l(\bm{\lambda}) \right\|_{2,S}^2 
 + \sum_{i=1}^2 \sum_{l=1}^{m_i} 
  \left\| P_i^l(\bm{\lambda}) - \psi_i^l(\bm{\lambda}) \right\|_{1,S}^2 \\
&\leq K_8  \Bigl(C_s W^{-2s+8} (\ln W) (C d^s s!)^2 + C_sW^{-2s+6} (C d^s s!)^2 \Bigr).
\end{aligned}
\label{eq:4.35}
\end{equation}

Combining \eqref{eq:4.34} and \eqref{eq:4.35}, we get
\begin{equation}
\begin{aligned}
&\sum_{i=1}^2 \sum_{l=1}^{m_i} \left\| \bm{U}_i^l(\bm{\lambda}) - \widetilde{\bm{z}}_i^l(\bm{\lambda}) \right\|_{2,S}^2 
+ \sum_{i=1}^2 \sum_{l=1}^{m_i} \left\| P_i^l(\bm{\lambda}) - \widetilde{q}_i^l(\bm{\lambda}) \right\|_{1,S}^2\\
&\leq K  \Bigl(C_s W^{-2s+8} (\ln W) (C d^s s!)^2 + C_sW^{-2s+6} (C d^s s!)^2 \Bigr).
\label{eq:4.36}
\end{aligned}
\end{equation}
Hence, with a proper choice of \( s \) and using Stirling’s formula as in~\cite{tomar}, we can easily prove that there exists a constant \( b > 0 \) such that
\begin{equation}
\begin{aligned}
&\sum_{i=1}^2 \sum_{l=1}^{m_i} \left\| \bm{U}_i^l(\bm{\lambda}) - \widetilde{\bm{z}}_i^l(\bm{\lambda}) \right\|_{2,S}^2 
+ \sum_{i=1}^2 \sum_{l=1}^{m_i} \left\| P_i^l(\bm{\lambda}) - \widetilde{q}_i^l(\bm{\lambda}) \right\|_{1,S}^2\\
&\leq  C e^{-bW}.
\label{eq:4.37}
\end{aligned}
\end{equation}
\end{proof}
\begin{remark}
After obtaining a nonconforming solution, a set of corrections can be applied so that the velocity variable \( \hat{\bm{z}} \) becomes conforming \cite{tomar}. Then \( \hat{\bm{z}} \in \bm{H}^{1}(\Omega) \), and we have the following error estimate:
\begin{align}
\| \bm{u} - \hat{\bm{z}} \|_{1,\Omega} + \| p - q \|_{0,\Omega} \leq C e^{-bW}.
\end{align}
\label{eq:4.38}
\end{remark}



\section{Numerical results}
\label{sec:NumericalResultsCh4}

In this section, we present the numerical results. Exponential convergence
of the numerical scheme is verified through various numerical examples.
The numerical examples include the Stokes interface problem on different
types of domains with different interfaces. We have considered the
interface problems with homogeneous as well as non-homogeneous jump
conditions across the interface. Even though we have derived the theoretical
estimates for the interface problem with Dirichlet boundary conditions,
here we also present the numerical results for the interface problem
with mixed boundary conditions. The numerical solution is exponentially
accurate in all the cases.

Higher-order spectral element functions of degree $W$ are used uniformly
in all the elements of the discretization. The nonconforming solution
is obtained using PCGM at GLL points, and the conforming solution
is obtained using a set of corrections. Let $\hat{\bm{z}}$ be the conforming
approximate solution of the velocity $\bm{u}$ and $q$ be the
approximate solution of the pressure $p.$ $\|E_{\bm{u}}\|_{1}=\frac{\left\Vert \bm{u}-\hat{\bm{z}}\right\Vert _{1}}{\left\Vert \bm{u}\right\Vert _{1}}$
denotes the relative error in $\bm{u}$ in $\bm{H}^{1}$ norm,
$\|E_{p}\|_{0}=\frac{\left\Vert p-q\right\Vert _{0}}{\left\Vert p\right\Vert _{0}}$
denotes the relative error in pressure in $L^{2}$ norm, and $\|E_{c}\|_{0}$
denotes the error in the continuity equation in $L^{2}$ norm. '\textbf{iters}'
denotes the total number of iterations required to reach the desired
accuracy. To ensure the uniqueness of the pressure variable in the
problems with Dirichlet boundary conditions, pressure is specified
to be zero at one point of the domain. 

\subsubsection*{Example 1: With homogeneous jump conditions on the interface}

Let, $\Omega=[0,1]^{2}$, consider the Stokes interface problem with
a straight line interface $\Gamma_{0}=\{(x,y):y=0.5\}$ and with Dirichlet
boundary condition on the boundary. The interface divides the domain
$\Omega$ into two subdomains $\Omega_{1}=\{(x,y)\in\Omega:0<y<0.5\}\text{ and }\Omega_{2}=\{(x,y)\in\Omega:0.5<y<1\}$.
The data is chosen such that the problem has the following exact solution:
\begin{eqnarray*}
u_{1}(x,y)=\begin{cases}
\frac{1}{\nu_{1}}(y-0.5)x^{2}\text{ in }\Omega_{1},\\
\frac{1}{\nu_{2}}(y-0.5)x^{2}\text{ in }\Omega_{2},
\end{cases}u_{2}(x,y)=\begin{cases}
-\frac{1}{\nu_{1}}x(y-0.5)^{2}\text{ }\text{in }\Omega_{1},\\
-\frac{1}{\nu_{2}}x(y-0.5)^{2}\text{ in }\Omega_{2},
\end{cases}
\end{eqnarray*}
\[
p(x,y)=e^{x}-e^{y}+c.
\]
Here, \( c \) is chosen such that pressure is zero at one point in the domain. This exact solution satisfies the homogeneous jump conditions across the interface \( y = 0.5 \).

To obtain the numerical solution, the domain is divided into four
elements such that the discretization matches along the interface
$y=0.5$. The numerical solution for interface problem with different
pairs of viscosity coefficients $\nu_{1}=1,\nu_{2}=0.1,\text{ }\nu_{1}=1,\nu_{2}=0.01$
and $\nu_{1}=1,\nu_{2}=0.001$ is obtained for different values of
$W$. The relative error in $\bm{u}$ in $\bm{H}^{1}$ norm,
relative error in pressure in $L^{2}$ norm, error in the continuity
equation in $L^{2}$ norm and the total number of iterations ("\textbf{iters}")
required to reach the desired accuracy are tabulated in Table $1$
for different values of $W$. 

\begin{table}[H]
~~~~~~~~~~~~~{\small{}}%
\begin{tabular}{|c|c|c|c|c|c|c|c|c|}
\hline 
\multicolumn{1}{|c}{} & \multicolumn{4}{c|}{{\small{}${\nu_{1}}=1$, $\nu_{2}=0.1$}} & \multicolumn{4}{c|}{{\small{}${\nu_{1}}=1$, $\nu_{2}=0.01$}}\tabularnewline
\hline 
{\small{}$\textbf{W}$} & {\small{}$\rVert\textit{E}_{\textbf{u}}\rVert_{1}$} & {\small{}$\rVert\textit{E}_{\textit{p}}\rVert_{0}$} & {\small{}$\rVert\textit{E}_{\textit{c}}\rVert_{0}$} & \textbf{\small{}iters} & {\small{}$\rVert\textit{E}_{\textbf{u}}\rVert_{1}$} & {\small{}$\rVert\textit{E}_{\textit{p}}\rVert_{0}$} & {\small{}$\rVert\textit{E}_{\textit{c}}\rVert_{0}$} & \textbf{\small{}iters}\tabularnewline
\hline 
{\small{}2} & {\small{}3.50E-02} & {\small{}3.01E-01} & {\small{}1.11E-01} & {\small{}30} & {\small{}2.81E-02} & {\small{}4.19E-01} & {\small{}8.15E-01} & {\small{}46}\tabularnewline
\hline 
{\small{}3} & {\small{}6.07E-03} & {\small{}5.07E-02} & {\small{}1.70E-02} & {\small{}98} & {\small{}4.90E-03} & {\small{}4.82E-02} & {\small{}7.64E-02} & {\small{}151}\tabularnewline
\hline 
{\small{}4} & {\small{}7.65E-04} & {\small{}9.39E-03} & {\small{}2.21E-03} & {\small{}228} & {\small{}6.41E-04} & {\small{}6.16E-03} & {\small{}1.12E-02} & {\small{}290}\tabularnewline
\hline 
{\small{}5} & {\small{}6.68E-05} & {\small{}8.57E-04} & {\small{}2.56E-04} & {\small{}399} & {\small{}4.94E-05} & {\small{}4.86E-04} & {\small{}7.59E-04} & {\small{}648}\tabularnewline
\hline 
{\small{}6} & {\small{}5.80E-06} & {\small{}8.25E-05} & {\small{}2.04E-05} & {\small{}744} & {\small{}3.69E-06} & {\small{}4.96E-05} & {\small{}5.20E-05} & {\small{}1050}\tabularnewline
\hline 
{\small{}7} & {\small{}6.39E-07} & {\small{}1.01E-05} & {\small{}2.07E-06} & {\small{}1273} & {\small{}5.01E-07} & {\small{}4.13E-06} & {\small{}6.21E-06} & {\small{}1474}\tabularnewline
\hline 
{\small{}8} & {\small{}4.57E-08} & {\small{}1.17E-06} & {\small{}1.64E-07} & {\small{}2115} & {\small{}2.24E-08} & {\small{}6.14E-07} & {\small{}5.67E-07} & {\small{}2210}\tabularnewline
\hline 
\end{tabular}{\small\par}

{\small{}~~~~~~~~~~~~~~~~~~~~~~~~~~~~~~~~~~~~~~~~~~~}%
\begin{tabular}{|c|c|c|c|c|}
\hline 
\multicolumn{5}{|c|}{{\small{}${\nu_{1}}=1$, $\nu_{2}=0.001$}}\tabularnewline
\hline 
{\small{}$\textbf{W}$} & {\small{}$\rVert\textit{E}_{\textbf{u}}\rVert_{1}$} & {\small{}$\rVert\textit{E}_{\textit{p}}\rVert_{0}$} & {\small{}$\rVert\textit{E}_{\textit{c}}\rVert_{0}$} & \textbf{\small{}iters}\tabularnewline
\hline 
{\small{}2} & {\small{}2.03E-01} & {\small{}1.13E+01} & {\small{}7.25E+01} & {\small{}8}\tabularnewline
\hline 
{\small{}3} & {\small{}1.39E-02} & {\small{}3.99E+00} & {\small{}3.27E+00} & {\small{}77}\tabularnewline
\hline 
{\small{}4} & {\small{}4.03E-03} & {\small{}5.82E-01} & {\small{}1.35E+00} & {\small{}157}\tabularnewline
\hline 
{\small{}5} & {\small{}1.16E-03} & {\small{}2.48E-02} & {\small{}8.63E-02} & {\small{}413}\tabularnewline
\hline 
{\small{}6} & {\small{}4.23E-04} & {\small{}5.48E-03} & {\small{}2.91E-02} & {\small{}662}\tabularnewline
\hline 
{\small{}7} & {\small{}1.43E-04} & {\small{}7.54E-04} & {\small{}2.92E-03} & {\small{}2603}\tabularnewline
\hline 
{\small{}8} & {\small{}1.55E-05} & {\small{}8.88E-05} & {\small{}2.35E-04} & {\small{}4455}\tabularnewline
\hline 
\end{tabular}{\small\par}

\caption{The errors {\small{}$\rVert\textit{E}_{\textbf{u}}\rVert_{1},\rVert\textit{E}_{\textit{p}}\rVert_{0}$}
and {\small{}$\rVert\textit{E}_{\textit{c}}\rVert_{0}$ against $W$ }}

\end{table}

Table 1 shows that the errors decay very fast as $W$ increases,
and the number of iterations is increasing as the ratio of the viscosity
coefficients $\frac{\nu_{1}}{\nu_{2}}$ increases. In the cases of
$\frac{\nu_{1}}{\nu_{2}}=10$ and $\frac{\nu_{1}}{\nu_{2}}=100,$
the approximate solution $\hat{\bm{z}}$ is obtained to an accuracy
of $O(10^{-6})$ with a smaller number of iterations (with $W=6$),
but the iteration count is increased to obtain $\hat{\bm{z}}$ to an
accuracy of $O(10^{-8}).$ In the case of $\frac{\nu_{1}}{\nu_{2}}=1000,$
one can see that the approximate solution $\hat{\bm{z}}$ is obtained
to an accuracy of $O(10^{-4})$ with a smaller number of iterations
(with $W=6$). The graph between the log of relative error and the
degree of polynomial $W$ is shown for $\nu_{1}=1,\nu_{2}=0.1,\text{ and }\nu_{1}=1,\nu_{2}=0.01$
for velocity and pressure variables in figures 5.a and 5.b respectively.
The graph is almost linear, which shows the exponential accuracy of
the proposed method. 
\begin{figure}[H]
\begin{centering}
\subfloat[Log of $\|E_{\bm{u}}\|_{1}$ vs. $W$ for $\upsilon_{2}=0.1,0.01$]{\begin{centering}
\includegraphics[width=8cm,height=5cm]{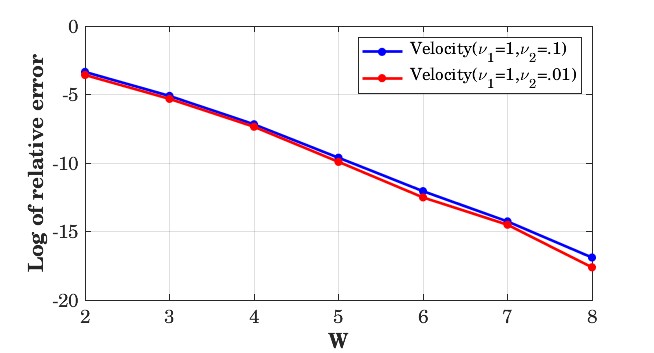}
\par\end{centering}
}\subfloat[Log of $\|E_{p}\|_{0}$ vs. $W$ for $\upsilon_{2}=0.1,0.01$]{\begin{centering}
\includegraphics[width=8cm,height=5cm]{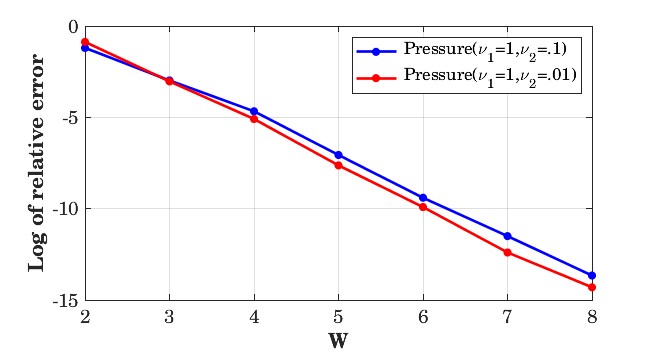}
\par\end{centering}
}
\par\end{centering}
\caption{Log of relative errors against $W$}
\end{figure}

We have interchanged the viscosity coefficient data and obtained the
numerical solution. Table 2 shows the relative errors {\small{}$\rVert\textit{E}_{\textbf{u}}\rVert_{1},\rVert\textit{E}_{\textit{p}}\rVert_{0}$
and the error $\rVert\textit{E}_{\textit{c}}\rVert_{0}$ against $W$
for }$\frac{\nu_{2}}{\nu_{1}}=10$ and $\frac{\nu_{2}}{\nu_{1}}=100.$
The results show the exponential convergence of the numerical scheme. 

\begin{table}[H]
~~~%
\begin{tabular}{|c|c|c|c|c|c|c|c|c|}
\hline 
\multicolumn{1}{|c}{} & \multicolumn{4}{c|}{{\small{}${\nu_{1}}=0.1$, $\nu_{2}=1$}} & \multicolumn{4}{c|}{{\small{}${\nu_{1}}=0.01$, $\nu_{2}=1$}}\tabularnewline
\hline 
{\small{}$W$} & {\small{}$\rVert\textit{E}_{\textbf{u}}\rVert_{1}$} & {\small{}$\rVert\textit{E}_{\textit{p}}\rVert_{0}$} & {\small{}$\rVert\textit{E}_{\textit{c}}\rVert_{0}$} & \textbf{\small{}iters} & {\small{}$\rVert\textit{E}_{\textbf{u}}\rVert_{1}$} & {\small{}$\rVert\textit{E}_{\textit{p}}\rVert_{0}$} & {\small{}$\rVert\textit{E}_{\textit{c}}\rVert_{0}$} & \textbf{\small{}iters}\tabularnewline
\hline 
{\small{}2} & {\small{}4.36E-02} & {\small{}5.73E-01} & {\small{}1.57E-01} & {\small{}32} & {\small{}1.88E-01} & {\small{}7.57E-01} & {\small{}7.16E+00} & 9\tabularnewline
\hline 
{\small{}3} & {\small{}7.84E-03} & {\small{}1.55E-01} & {\small{}2.15E-02} & {\small{}93} & {\small{}5.97E-03} & {\small{}4.46E-01} & {\small{}1.19E-01} & 139\tabularnewline
\hline 
{\small{}4} & {\small{}5.39E-04} & {\small{}2.07E-02} & {\small{}1.49E-03} & {\small{}250} & {\small{}1.74E-03} & {\small{}1.57E-01} & {\small{}4.94E-02} & {\small{}220}\tabularnewline
\hline 
{\small{}5} & {\small{}3.80E-05} & {\small{}1.11E-03} & {\small{}1.38E-04} & {\small{}444} & {\small{}4.86E-04} & {\small{}3.13E-03} & {\small{}3.64E-03} & {\small{}452}\tabularnewline
\hline 
{\small{}6} & {\small{}2.92E-06} & {\small{}2.87E-04} & {\small{}9.60E-06} & {\small{}888} & {\small{}4.90E-05} & {\small{}5.72E-04} & {\small{}5.59E-04} & {\small{}784}\tabularnewline
\hline 
{\small{}7} & {\small{}1.47E-07} & {\small{}1.83E-05} & {\small{}5.12E-07} & {\small{}1528} & {\small{}6.77E-07} & {\small{}8.26E-05} & {\small{}8.72E-06} & {\small{}1429}\tabularnewline
\hline 
\end{tabular}

\caption{Errors against $W$ for $\nu_{1}=0.1,0.01$}

\end{table}

\subsubsection*{Example 2: With non-homogeneous jump condition on the interface}

Let $\Omega=[0,1]^{2}.$ Consider the Stokes interface problem with
a straight line interface $\Gamma_{0}=\{(x,y):y=0.5\}$ and with Dirichlet
boundary condition on the boundary. The interface divides the domain
$\Omega$ into two subdomains $\Omega_{1}=\{(x,y)\in\Omega:0<y<0.5\}\text{ and }\Omega_{2}=\{(x,y)\in\Omega:0.5<y<1\}$.
The data is chosen such that the problem has the following exact solution:
\begin{eqnarray*}
u_{1}(x,y)=\begin{cases}
\frac{1}{\nu_{1}}(y-0.5)x^{2}\text{ in }\Omega_{1},\\
\frac{1}{\nu_{2}}(y-0.5)x^{2}\text{ in }\Omega_{2},
\end{cases}u_{2}(x,y)=\begin{cases}
-\frac{1}{\nu_{1}}x(y-0.5)^{2}\text{ in }\Omega_{1},\\
-\frac{1}{\nu_{2}}x(y-0.5)^{2}\text{ in }\Omega_{2},
\end{cases}
\end{eqnarray*}
\[
p(x,y)=\begin{cases}
2xy+x^{2}\text{ in }\Omega_{1},\\
2xy+x^{2}-3\text{ in }\Omega_{2}.
\end{cases}
\]
\vspace{0.2cm}
Here, the solution satisfies non-homogeneous jump condition ($[(\nu\nabla\bm{u}-pI)\mathbf{n}]$)
on the interface $y=0.5$. The numerical solution is obtained for
$\nu_{1}=1,\nu_{2}=0.1.$

\begin{wraptable}{o}{0.5\columnwidth}%
{\small{}}%
\begin{tabular}{|c|c|c|c|c|}
\hline 
\multicolumn{5}{|c|}{{\small{}$\nu_{1}=1,\nu_{2}=0.1$}}\tabularnewline
\hline 
{\small{}$\textbf{W}$} & {\small{}$\rVert\textit{E}_{\textbf{u}}\rVert_{1}$} & {\small{}$\rVert\textit{E}_{\textit{p}}\rVert_{0}$} & {\small{}$\rVert\textit{E}_{\textit{c}}\rVert_{0}$} & \textbf{\small{}iters}\tabularnewline
\hline 
{\small{}2} & {\small{}3.46E-02} & {\small{}5.42E-02} & {\small{}1.01E-01} & {\small{}32}\tabularnewline
\hline 
{\small{}4} & {\small{}1.62E-04} & {\small{}6.76E-04} & {\small{}4.77E-04} & {\small{}250}\tabularnewline
\hline 
{\small{}6} & {\small{}1.48E-06} & {\small{}4.91E-06} & {\small{}4.64E-06} & {\small{}847}\tabularnewline
\hline 
{\small{}8} & {\small{}1.06E-08} & {\small{}7.22E-08} & {\small{}3.90E-08} & {\small{}2318}\tabularnewline
\hline 
{\small{}10} & {\small{}7.99E-11} & {\small{}7.53E-10} & {\small{}3.21E-10} & {\small{}4642}\tabularnewline
\hline 
\end{tabular}{\small\par}

\caption{Errors for {\small{}$\nu_{1}=1,\nu_{2}=0.1$} }
\end{wraptable}%
The domain is divided into four elements such that the discretization
matches along the interface $y=0.5$. The numerical solution is obtained
for different values of $W$. The relative errors $\|E_{\bm{u}}\|_{1},\|E_{p}\|_{0},$
and the error in the continuity equation $\|E_{c}\|_{0}$ and the
total number of iterations required to reach the desired accuracy
are tabulated in Table $3$ for different values of $W$. Similar
results are expected for $\nu_{1}=1,\nu_{2}=0.01$ with an increase
in the number of iterations. One can see the approximate solution
$\hat{\bm{z}}$ is obtained to an accuracy of $O(10^{-6})$ with a smaller
number of iterations (with $W=6$). The numerical scheme is able to
approximate $\bm{u}$ to an accuracy of $O(10^{-11}),$ $p$ to
an accuracy of $O(10^{-10})$ with an increase in the iteration count.
The decay of the error in the continuity equation $\|E_{c}\|_{0}$
shows that the numerical scheme has good mass conservation property. 

\begin{wrapfigure}{o}{0.5\columnwidth}%
$\!\!\!\!\!\!$\includegraphics[scale=0.25]{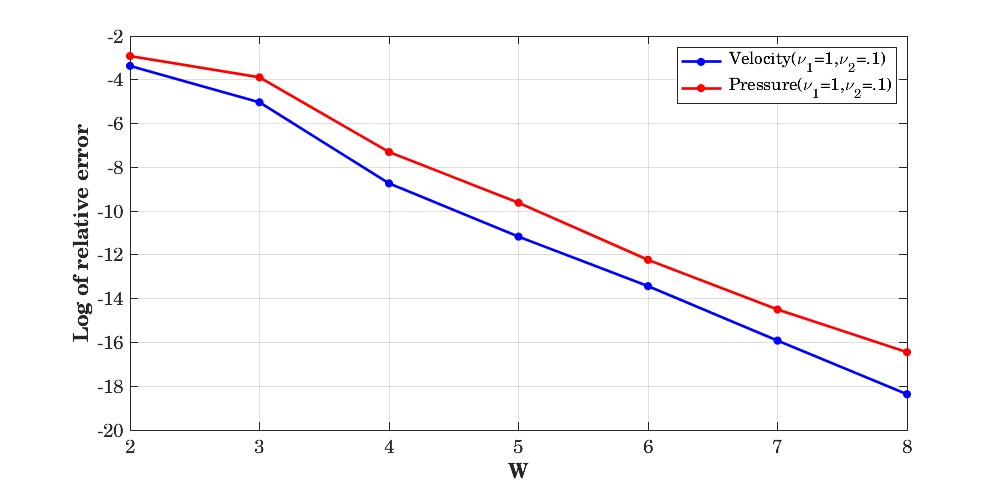}

\caption{Log of errors $\|E_{\bm{u}}\|_{1},\|E_{p}\|_{0}$ vs. $W$ }
\end{wrapfigure}%
The graph between the log of relative errors $\|E_{\bm{u}}\|_{1},\|E_{p}\|_{0}$
and the degree of polynomial $W$ is shown for $\nu_{1}=1,\nu_{2}=0.1$
in figure 6. The graph is almost linear, which shows the exponential
accuracy of the method.

\subsubsection*{Example 3: Stokes interface problem on an annular domain}

Consider the stokes interface problem on the annular domain $\Omega=\{(r,\theta):1\le r\le2,0\le\theta\le\frac{\pi}{2}\}$
having curved interface $\Gamma_{0}=\{(r_{0},\theta):r_{0}=1.5,0\le\theta\le\frac{\pi}{2}\}$
that separates $\Omega$ into two subdomains $\Omega_{1}=\{(x,y):1<x^{2}+y^{2}<2.25\}\text{ and }\Omega_{2}=\{(x,y):2.25<x^{2}+y^{2}<4\}$.
We consider Dirichlet boundary condition on the boundary.

\begin{wraptable}{o}{0.5\columnwidth}%
{\small{}}%
\begin{tabular}{|c|c|c|c|c|}
\hline 
\multicolumn{5}{|c|}{{\small{}$\nu_{1}=1,\nu_{2}=0.1$}}\tabularnewline
\hline 
{\small{}$\textbf{W}$} & {\small{}$\rVert\textit{E}_{\textbf{u}}\rVert_{1}$} & {\small{}$\rVert\textit{E}_{\textit{p}}\rVert_{0}$} & {\small{}$\rVert\textit{E}_{\textit{c}}\rVert_{0}$} & \textbf{\small{}iters}\tabularnewline
\hline 
{\small{}2} & {\small{}9.37E-01} & {\small{}9.86E-01} & {\small{}2.35E+00} & {\small{}1}\tabularnewline
\hline 
{\small{}4} & {\small{}2.85E-02} & {\small{}3.16E-01} & {\small{}4.20E-01} & {\small{}120}\tabularnewline
\hline 
{\small{}6} & {\small{}2.72E-04} & {\small{}2.88E-03} & {\small{}1.07E-02} & {\small{}725}\tabularnewline
\hline 
{\small{}8} & {\small{}2.30E-06} & {\small{}4.07E-05} & {\small{}9.54E-05} & {\small{}2644}\tabularnewline
\hline 
\end{tabular}\caption{Errors for $\nu_{1}=1,\nu_{2}=0.1$}
\end{wraptable}%
The data is chosen such that the problem has the following exact solution:
\begin{eqnarray*}
u_{1}(x,y)=\begin{cases}
\frac{-1}{\nu_{1}}\,\ y\,\ \sin(2.25-x^{2}-y^{2})\text{ in }\Omega_{1},\\
\frac{-1}{\nu_{2}}\,\ y\,\ \sin(2.25-x^{2}-y^{2})\text{ in }\Omega_{2},
\end{cases}\\
u_{2}(x,y)=\begin{cases}
\frac{1}{\nu_{1}}\,\ x\,\sin(2.25-x^{2}-y^{2})\text{ in }\Omega_{1},\\
\frac{1}{\nu_{2}}\,\ x\,\sin(2.25-x^{2}-y^{2})\text{ in }\Omega_{2},
\end{cases}
\end{eqnarray*}
\[
p(x,y)=e^{x+y}-e^{2}.
\]
Here, the solution satisfies the homogeneous jump condition on the
interface $\Gamma_{0}$. The annular domain is divided into four elements
such that the discretization matches along the interface $\Gamma_{0}$
(see figure 7.a). The interface is completely resolved using blending
elements \cite{GO}. The numerical solution is obtained for different
values of $W$ for $\nu_{1}=1,\nu_{2}=0.1$. The relative error in
$\bm{u}$ in $\bm{H}^{1}$ norm, relative error in pressure
in $L^{2}$ norm, error in the continuity equation in $L^{2}$ norm
and the total number of iterations are tabulated in Table $4$ for
different values of $W$. The graph between the log of the relative
error and the degree of polynomial $W$ is shown for $\nu_{1}=1,\nu_{2}=0.1$
in Figure 7.b. The graph is almost linear, which shows the exponential
accuracy of the method. 
\begin{figure}[H]
\subfloat[Discretization of the annular domain]{\includegraphics[width=8cm,height=5cm]{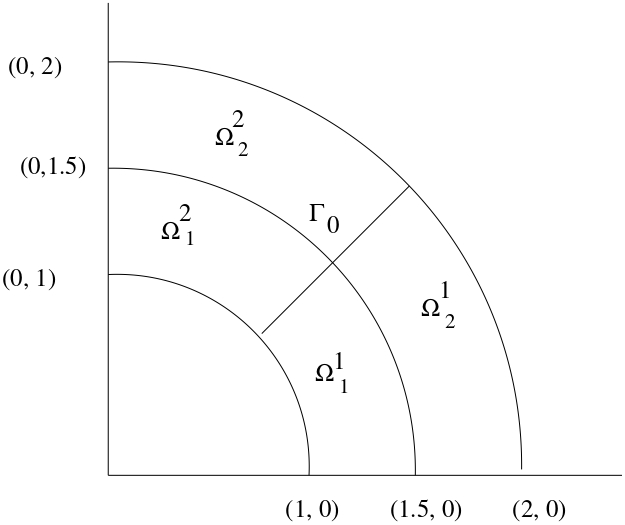}

}~~~~~~\subfloat[Log of the relative errors for $\nu_{1}=1,\nu_{2}=0.1$]{\includegraphics[width=8cm,height=5cm]{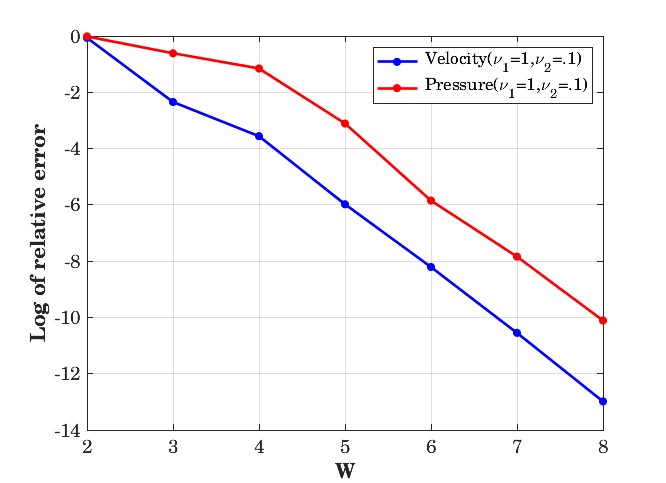}

}

\caption{Domain with curved interface and error in the numerical approximation }
\end{figure}

\subsubsection*{Example 4: Stokes interface problem with circular interface}

Consider the stokes interface problem on the domain $\Omega=[-1,1]^{2}$
having interface $\Gamma_{0}=\{(x,y):x^{2}+y^{2}=0.25\}$ that separates
$\Omega$ into two subdomains $\Omega_{1}=\{(x,y):x^{2}+y^{2}<0.25\}\text{ and }\Omega_{2}=\Omega\setminus\overline{\Omega}_{1}$
with Dirichlet boundary condition on the boundary of the domain $\Omega$.
The data is chosen such that the problem has the following exact solution:
\begin{eqnarray*}
u_{1}(x,y)=\begin{cases}
\frac{1}{\nu_{1}}\,\ y(x^{2}+y^{2}-0.25)\text{ in }\Omega_{1},\\
\frac{1}{\nu_{2}}\,\ y(x^{2}+y^{2}-0.25)\text{ in }\Omega_{2},
\end{cases}u_{2}(x,y)=\begin{cases}
-\frac{1}{\nu_{1}}\,\ x(x^{2}+y^{2}-0.25)\text{ in }\Omega_{1},\\
-\frac{1}{\nu_{2}}\,\ x(x^{2}+y^{2}-0.25)\text{ in }\Omega_{2},
\end{cases}
\end{eqnarray*}
\[
p(x,y)=x^{2}-y^{2}.
\]
The solution satisfies the homogeneous jump condition on the interface
$\Gamma_{0}$. The domain is divided into nine elements such that
the discretization matches along the interface $\Gamma_{0}$(very
similar to the discretization shown in Fig. 2). The numerical solution
is obtained for different values of $W$. The relative error in $\textbf{\textit{u}}$
in $\bm{H}^{1}$ norm, the relative error in pressure in $L^{2}$
norm, and the error in the continuity equation in $L^{2}$ norm are
tabulated in Table $5$ for $\nu_{2}=1,\nu_{1}=0.1\text{ and }\nu_{2}=1,\nu_{1}=0.01$
for different values of $W$. The graph between the log of relative
errors $\|E_{\bm{u}}\|_{1},\|E_{p}\|_{0},$ and the degree of
polynomial $W$ is shown for $\nu_{2}=1,\nu_{1}=0.1$ and $\nu_{2}=1,\nu_{1}=0.01$
in Figures 8.a and 8.b respectively. The graphs show the exponential
accuracy of the method. 

\begin{table}[H]
~~~~~~{\small{}}%
\begin{tabular}{|c|c|c|c|c|c|c|c|c|}
\hline 
\multicolumn{1}{|c}{} & \multicolumn{4}{c|}{{\small{}${\nu_{1}}=0.1$, $\nu_{2}=1$}} & \multicolumn{4}{c|}{{\small{}${\nu_{1}}=0.01$, $\nu_{2}=1.0$}}\tabularnewline
\hline 
{\small{}$\textbf{W}$} & {\small{}$\rVert\textit{E}_{\textbf{u}}\rVert_{1}$} & {\small{}$\rVert\textit{E}_{\textit{p}}\rVert_{0}$} & {\small{}$\rVert\textit{E}_{\textit{c}}\rVert_{0}$} & \textbf{\small{}iters} & {\small{}$\rVert\textit{E}_{\textbf{u}}\rVert_{1}$} & {\small{}$\rVert\textit{E}_{\textit{p}}\rVert_{0}$} & {\small{}$\rVert\textit{E}_{\textit{c}}\rVert_{0}$} & \textbf{\small{}iters}\tabularnewline
\hline 
{\small{}2} & {\small{}8.95E-01} & {\small{}9.87E-01} & {\small{}3.69E-01} & {\small{}1} & {\small{}9.92E-01} & {\small{}9.86E-01} & {\small{}3.95E-01} & {\small{}1}\tabularnewline
\hline 
{\small{}3} & {\small{}1.58E-01} & {\small{}4.27E-01} & {\small{}3.39E-01} & {\small{}86} & {\small{}6.65E-01} & {\small{}1.59E-01} & {\small{}2.65E-01} & {\small{}313}\tabularnewline
\hline 
{\small{}4} & {\small{}2.21E-02} & {\small{}9.72E-02} & {\small{}1.06E-01} & {\small{}352} & {\small{}5.62E-02} & {\small{}6.87E-02} & {\small{}1.38E-01} & {\small{}1348}\tabularnewline
\hline 
{\small{}5} & {\small{}2.78E-03} & {\small{}1.62E-02} & {\small{}1.47E-02} & {\small{}727} & {\small{}3.31E-03} & {\small{}8.65E-03} & {\small{}1.77E-02} & {\small{}2360}\tabularnewline
\hline 
{\small{}6} & {\small{}4.13E-04} & {\small{}2.22E-03} & {\small{}2.19E-03} & {\small{}1621} & {\small{}3.89E-04} & {\small{}1.38E-03} & {\small{}2.40E-03} & {\small{}3843}\tabularnewline
\hline 
{\small{}7} & {\small{}5.58E-05} & {\small{}3.20E-04} & {\small{}3.00E-04} & {\small{}2861} & {\small{}5.89E-05} & {\small{}1.92E-04} & {\small{}2.96E-04} & {\small{}5629}\tabularnewline
\hline 
\end{tabular}{\small\par}

\caption{Relative errors {\small{}$\rVert\textit{E}_{\textbf{u}}\rVert_{1}$,$\rVert\textit{E}_{\textit{p}}\rVert_{0}$
and $\rVert\textit{E}_{\textit{c}}\rVert_{0}$} for $\upsilon_{1}=0.1,0.01$}
\end{table}

\begin{figure}[H]
\begin{centering}
\subfloat[Decay of the relative errors in $\bm{u}$ and $p$ for $\nu_{1}=0.1$]{\begin{centering}
\includegraphics[width=8.5cm,height=5cm]{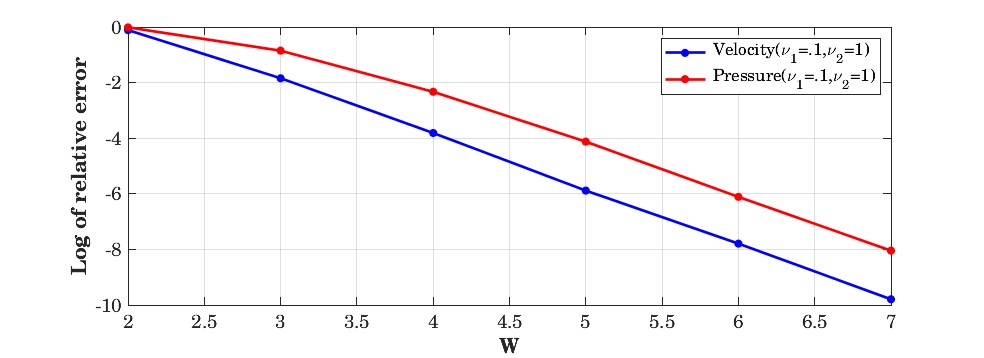}
\par\end{centering}

}\subfloat[Decay of the relative error in $\bm{u}$ and $p$ for $\nu_{1}=0.01$]{\begin{centering}
\includegraphics[width=8cm,height=5cm]{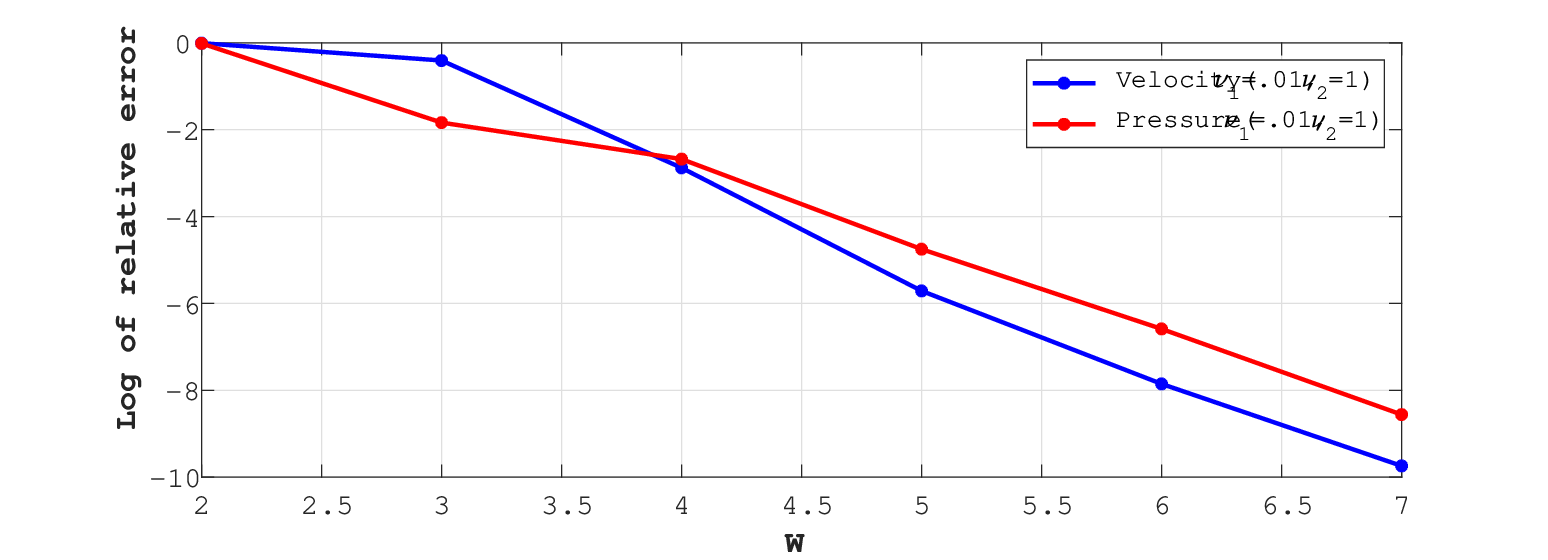}
\par\end{centering}

}
\par\end{centering}
\caption{Log of relative errors against $W$}
\end{figure}

\subsubsection*{Example 5: Interface problem with mixed boundary conditions}

Here, we consider the Stokes interface problem on the domain $\Omega=[-1,1]^{2}$
having a circular interface $\Gamma_{0}=\{(x,y):x^{2}+y^{2}=0.25\}$
that separates $\Omega$ into two subdomains $\Omega_{1}=\{(x,y):x^{2}+y^{2}<0.25\}\text{ and }\Omega_{2}=\Omega\setminus\overline{\Omega}_{1}.$
We consider the Neumann type boundary condition $\frac{\partial\bm{u}}{\partial\mathbf{n}}-p\mathbf{n}$
on the side $y=-1$ and the Dirichlet boundary condition on all the
other sides of the boundary of the domain. The data is chosen such
that the exact solution is the same as the one considered in Example
$4.$ 

We divided the domain into nine elements such that the discretization
matches along the interface $\Gamma_{0}$. Here we have considered
piecewise constant viscosity coefficients $\nu_{1}=0.1,\nu_{2}=1,\text{ and }\nu_{1}=0.01,\nu_{2}=1$
and obtained the numerical solution for different values of $W$.
Table 6 presents the relative errors in $\bm{u},p$ and the error
in the continuity equation for $\nu_{1}=0.1,\nu_{2}=1,\text{ and }\nu_{1}=0.01,\nu_{2}=1.$
The results confirm the exponential convergence in velocity and pressure
variables and the mass conservation property of the numerical scheme. 

\begin{table}[H]
~~~~~~~~{\small{}}%
\begin{tabular}{|c|c|c|c|c|c|c|c|c|}
\hline 
\multicolumn{1}{|c}{} & \multicolumn{4}{c|}{{\small{}${\nu_{1}}=0.1$, $\nu_{2}=1$}} & \multicolumn{4}{c|}{{\small{}${\nu_{1}}=0.01$, $\nu_{2}=1$}}\tabularnewline
\hline 
{\small{}$\textbf{W}$} & {\small{}$\rVert\textit{E}_{\textbf{u}}\rVert_{1}$} & {\small{}$\rVert\textit{E}_{\textit{p}}\rVert_{0}$} & {\small{}$\rVert\textit{E}_{\textit{c}}\rVert_{0}$} & \textbf{\small{}iters} & {\small{}$\rVert\textit{E}_{\textbf{u}}\rVert_{1}$} & {\small{}$\rVert\textit{E}_{\textit{p}}\rVert_{0}$} & {\small{}$\rVert\textit{E}_{\textit{c}}\rVert_{0}$} & \textbf{\small{}iters}\tabularnewline
\hline 
{\small{}2} & {\small{}8.85E-01} & {\small{}9.82E-01} & {\small{}4.87E-01} & {\small{}1} & {\small{}9.93E-01} & {\small{}9.81E-01} & {\small{}5.10E-01} & {\small{}1}\tabularnewline
\hline 
{\small{}3} & {\small{}8.20E-02} & {\small{}2.34E-01} & {\small{}2.78E-01} & {\small{}168} & {\small{}6.81E-01} & {\small{}1.65E-01} & {\small{}2.74E-01} & {\small{}299}\tabularnewline
\hline 
{\small{}4} & {\small{}2.76E-02} & {\small{}1.82E-01} & {\small{}1.10E-01} & {\small{}381} & {\small{}5.64E-02} & {\small{}8.50E-02} & {\small{}1.41E-01} & {\small{}1436}\tabularnewline
\hline 
{\small{}5} & {\small{}4.25E-03} & {\small{}5.23E-02} & {\small{}9.55E-03} & {\small{}1396} & {\small{}2.81E-03} & {\small{}8.25E-03} & {\small{}1.90E-02} & {\small{}2762}\tabularnewline
\hline 
{\small{}6} & {\small{}5.05E-04} & {\small{}5.15E-03} & {\small{}1.85E-03} & {\small{}2378} & {\small{}2.98E-04} & {\small{}1.43E-03} & {\small{}1.84E-03} & {\small{}4650}\tabularnewline
\hline 
{\small{}7} & {\small{}3.58E-05} & {\small{}3.72E-04} & {\small{}1.55E-04} & {\small{}5078} & {\small{}3.40E-05} & {\small{}1.50E-04} & {\small{}3.46E-04} & {\small{}6784}\tabularnewline
\hline 
\end{tabular}{\small\par}

\caption{{\small{}$\rVert\textit{E}_{\textbf{u}}\rVert_{1}$,$\rVert\textit{E}_{\textit{p}}\rVert_{0}$
and $\rVert\textit{E}_{\textit{c}}\rVert_{0}$ for different $W$ }}
\end{table}

\textbf{$\!\!\!\!\!\!\!\!\!\!$}\textbf{\textit{Remark:}}\textit{
In all the above examples, we have considered small viscosity coefficients
in one of the subdomains $\Omega_{1}$ or $\Omega_{2}$ (either $\nu_{1}=0.1/0.01,\nu_{2}=1$
or $\nu_{1}=1,\nu_{2}=0.1/0.01/0.001)$. In all the cases, we have
used the quadratic form given in Eq. \eqref{eq:4.3} as a preconditioner. This
preconditioner is not giving satisfactory results in terms of accuracy
and efficiency when the viscosity coefficients are large (for example,
either $\nu_{1}=100/1000$ or $\nu_{2}=100/1000$). So we have modified
the quadratic form \eqref{eq:4.3} as in these cases and obtained the numerical
results. Here we present the numerical results for the Stokes interface
problem with large piecewise viscocity coefficients with the modified
precondiners. }

\subsubsection*{Numerical results with different preconditioners}

First, we consider the Stokes interface problem stated in Example
2, but with different viscosity coefficients: $\nu_{1}=10,\nu_{2}=1.0$
and $\nu_{1}=100,\nu_{2}=1.0.$ In this case, we have used the following
quadratic form as a preconditioner and obtained the numerical solution
for different values of $W.$
\begin{eqnarray*}
\sum_{k=1}^{m_{1}}\text{\ensuremath{\nu_{1}^{2}}}||\tilde{\bm{u}}_{1}^{k}(\xi,\eta)||_{2,S}^{2}+\sum_{k=1}^{m_{1}}||\tilde{p}_{1}^{k}(\xi,\eta)||_{1,S}^{2}+\sum_{l=1}^{m_{2}}\text{\ensuremath{\nu_{2}^{2}}}||\tilde{\bm{u}}_{2}^{l}(\xi,\eta)||_{2,S}^{2}+\sum_{l=1}^{m_{2}}||\tilde{p}_{2}^{l}(\xi,\eta)||_{1,S}^{2}.
\end{eqnarray*}

We have used the same discretization as in Example 2 and obtained
the numerical solution using PCGM. Table 7 presents the relative errors
in $\bm{u},p,$ and error in the continuity equation for different
values of $W.$ The results show the accuracy of the method, and the
error in the continuity equation shows the mass conservation property
of the numerical method.

\begin{table}[H]
~~~~~~{\small{}}%
\begin{tabular}{|c|c|c|c|c|c|c|c|c|}
\hline 
\multicolumn{1}{|c}{} & \multicolumn{4}{c|}{{\small{}${\nu_{1}}=10$, $\nu_{2}=1$}} & \multicolumn{4}{c|}{{\small{}${\nu_{1}}=100$, $\nu_{2}=1$}}\tabularnewline
\hline 
{\small{}$\textbf{W}$} & {\small{}$\rVert\textit{E}_{\textbf{u}}\rVert_{1}$} & {\small{}$\rVert\textit{E}_{\textit{p}}\rVert_{0}$} & {\small{}$\rVert\textit{E}_{\textit{c}}\rVert_{0}$} & \textbf{\small{}iters} & {\small{}$\rVert\textit{E}_{\textbf{u}}\rVert_{1}$} & {\small{}$\rVert\textit{E}_{\textit{p}}\rVert_{0}$} & {\small{}$\rVert\textit{E}_{\textit{c}}\rVert_{0}$} & \textbf{\small{}iters}\tabularnewline
\hline 
{\small{}2} & {\small{}5.72E-01} & {\small{}6.83E-01} & {\small{}1.37E-01} & {\small{}2} & {\small{}5.19E-01} & {\small{}6.83E-01} & {\small{}1.08E-01} & {\small{}2}\tabularnewline
\hline 
{\small{}3} & {\small{}2.42E-01} & {\small{}3.82E-01} & {\small{}5.56E-02} & 24 & {\small{}1.79E-01} & {\small{}2.80E-01} & {\small{}3.75E-02} & 43\tabularnewline
\hline 
{\small{}4} & {\small{}1.65E-02} & {\small{}7.71E-02} & {\small{}4.60E-03} & {\small{}261} & {\small{}8.17E-03} & {\small{}8.79E-02} & {\small{}3.08E-03} & {\small{}270}\tabularnewline
\hline 
{\small{}5} & {\small{}2.80E-03} & {\small{}1.12E-02} & {\small{}8.86E-04} & {\small{}689} & {\small{}1.85E-03} & {\small{}5.04E-02} & {\small{}5.70E-04} & {\small{}859}\tabularnewline
\hline 
{\small{}6} & {\small{}5.75E-04} & {\small{}1.78E-03} & {\small{}2.52E-04} & {\small{}1548} & {\small{}6.90E-04} & {\small{}2.37E-02} & {\small{}2.02E-04} & {\small{}2174}\tabularnewline
\hline 
\end{tabular}{\small\par}

\caption{Relative errors {\small{}$\rVert\textit{E}_{\textbf{u}}\rVert_{1}$,$\rVert\textit{E}_{\textit{p}}\rVert_{0}$
and $\rVert\textit{E}_{\textit{c}}\rVert_{0}$} for $\upsilon_{1}=10,100$}
\end{table}

Next, we consider the interface problem stated in Example 4, but with
different viscosity coefficients: $\nu_{1}=100,\nu_{2}=1.0$ and $\nu_{1}=1000,\nu_{2}=1.0.$
In this case, we have used the following quadratic form as a preconditioner
and obtained the numerical solution for different values of $W.$
\begin{eqnarray*}
\sum_{k=1}^{m_{1}}\text{\ensuremath{\nu_{1}^{3}}}||\tilde{\bm{u}}_{1}^{k}(\xi,\eta)||_{2,S}^{2}+\sum_{k=1}^{m_{1}}||\tilde{p}_{1}^{k}(\xi,\eta)||_{1,S}^{2}+\sum_{l=1}^{m_{2}}\text{\ensuremath{\nu_{2}^{3}}}||\tilde{\bm{u}}_{2}^{l}(\xi,\eta)||_{2,S}^{2}+\sum_{l=1}^{m_{2}}||\tilde{p}_{2}^{l}(\xi,\eta)||_{1,S}^{2}.
\end{eqnarray*}

\begin{table}[H]
~~~~~~{\small{}}%
\begin{tabular}{|c|c|c|c|c|c|c|c|c|}
\hline 
\multicolumn{1}{|c}{} & \multicolumn{4}{c|}{{\small{}${\nu_{1}}=100$, $\nu_{2}=1$}} & \multicolumn{4}{c|}{{\small{}${\nu_{1}}=1000$, $\nu_{2}=1$}}\tabularnewline
\hline 
{\small{}$\textbf{W}$} & {\small{}$\rVert\textit{E}_{\textbf{u}}\rVert_{1}$} & {\small{}$\rVert\textit{E}_{\textit{p}}\rVert_{0}$} & {\small{}$\rVert\textit{E}_{\textit{c}}\rVert_{0}$} & \textbf{\small{}iters} & {\small{}$\rVert\textit{E}_{\textbf{u}}\rVert_{1}$} & {\small{}$\rVert\textit{E}_{\textit{p}}\rVert_{0}$} & {\small{}$\rVert\textit{E}_{\textit{c}}\rVert_{0}$} & \textbf{\small{}iters}\tabularnewline
\hline 
{\small{}2} & {\small{}8.71E-01} & {\small{}9.86E-01} & {\small{}3.96E-01} & {\small{}1} & {\small{}8.71E-01} & {\small{}9.86E-01} & {\small{}3.95E-01} & {\small{}1}\tabularnewline
\hline 
{\small{}3} & {\small{}4.92E-02} & {\small{}3.42E-01} & {\small{}1.40E-01} & 30 & {\small{}5.05E-02} & {\small{}2.66E-01} & {\small{}1.18E-01} & 28\tabularnewline
\hline 
{\small{}4} & {\small{}1.39E-02} & {\small{}1.23E-01} & {\small{}4.94E-02} & 170 & {\small{}2.56E-02} & {\small{}7.62E-02} & {\small{}4.75E-02} & {\small{}86}\tabularnewline
\hline 
{\small{}5} & {\small{}4.90E-03} & {\small{}5.66E-02} & {\small{}1.13E-02} & 343 & {\small{}3.98E-03} & {\small{}3.53E-02} & {\small{}1.37E-02} & {\small{}274}\tabularnewline
\hline 
{\small{}6} & {\small{}6.62E-04} & {\small{}1.87E-02} & {\small{}1.48E-03} & {\small{}1086} & {\small{}6.26E-04} & {\small{}1.55E-02} & {\small{}1.61E-03} & {\small{}1586}\tabularnewline
\hline 
\end{tabular}{\small\par}

\caption{Errors for $\nu_{1}=100,1000$}
\end{table}

By discretizing the domain with a circular interface into nine elements,
as in Example 4, we have obtained the relative errors {\small{}$\rVert\textit{E}_{\textbf{u}}\rVert_{1}$,$\rVert\textit{E}_{\textit{p}}\rVert_{0}$
and $\rVert\textit{E}_{\textit{c}}\rVert_{0}$} for different values
of $W.$ Table 8 presents these results for $\nu_{1}=100,\nu_{2}=1.0$
and $\nu_{1}=1000,\nu_{2}=1.0.$ The results show the exponential
accuracy of the numerical method.

\section{Conclusions}
 \label{sec:conclusion}

An exponentially accurate nonconforming spectral element method for
the Stokes interface problem with a smooth interface is presented.
The numerical results confirm the exponential accuracy of the numerical
method. This method has many advantages. The method is nonconforming,
and the interface is completely resolved using blending elements.
The inf-sup condition is not required to choose the approximation
spaces for velocity and pressure variables, so the same order spectral
element functions are used for both velocity and pressure variables.
The numerical formulation always leads to a symmetric, positive-definite
linear system. The method also has good mass conservation property.
The method also works for various boundary conditions different from
the Dirichlet boundary conditions. 

Numerical results for the Stokes interface problems with different
sets of piecewise constant viscosity coefficients and different types
of smooth interfaces are presented. One can see that the iteration
count is high when the ratio of viscosity coefficients is high. Different
types of preconditioners are used in some special cases. So there
is a need to develop an efficient preconditioner that works uniformly
in all cases. The development of an efficient preconditioner is under
investigation. The extension of the method to the three-dimensional
Stokes interface problem is ongoing work. 

\subsubsection*{Declarations }

\textbf{Conflict of interests:} All authors declare no conflict of
interests. \\
\\
\textbf{Data Availability:} The data related to this manuscript will
be available upon request.

\end{document}